\def\ps@pprintTitle{%
   \let\@oddhead\@empty
   \let\@evenhead\@empty
   \def\@oddfoot{\reset@font\hfil\thepage\hfil}
   \let\@evenfoot\@oddfoot
}
\newcommandx{\todoblue}[2][1=]{\todo[linecolor=blue,backgroundcolor=blue!25,bordercolor=blue,#1]{#2}}  
\newcommandx{\todored}[2][1=]{\todo[linecolor=red,backgroundcolor=red!25,bordercolor=red,#1]{#2}}  
\newtheorem{theorem}{Theorem}[section]
\newtheorem{lemma}[theorem]{Lemma}
\newtheorem{proposition}[theorem]{Proposition}
\theoremstyle{definition}
\newtheorem{remark}[theorem]{Remark}
\newcommand{\F}{\ensuremath{\mathbb F}}
\newcommand{\FF}{\F}
\newcommand{\Z}{\ensuremath{\mathbb Z}}
\newcommand{\ZZ}{\Z}
\newcommand{\QQ}{\mathbb{Q}}
\newcommand{\HC}{\ensuremath{\mathcal H}}
\let\hc\HC
\newcommand{\spec}{\operatorname{Spec}}
\newcommand{\End}{\operatorname{End}}
\begin{document}

\begin{frontmatter}

\title{Kummer surfaces for primality testing}

\author{Eduardo Ru\'{i}z Duarte \& Marc Paul Noordman}

\address{Universidad Nacional Aut\'{o}noma de M\'{e}xico, University of Groningen \\ \rm\url{rduarte@ciencias.unam.mx}, \url{m.p.noordman@rug.nl}}

\begin{abstract}
We use the arithmetic of the Kummer surface associated to the Jacobian of a hyperelliptic curve to study the primality of integers of the form $4m^2 5^n - 1$.
We provide an algorithm capable of proving the primality or compositeness of most of the integers in these families and discuss in detail the necessary steps to implement this algorithm in a computer. Although an indetermination is possible, in which case another choice of initial parameters should be used, we prove that the probability of reaching this situation is exceedingly low and decreases exponentially with $n$.
\end{abstract}

\begin{keyword}
Primality \sep Jacobians \sep Hyperelliptic Curves \sep Kummer Surface

\MSC[2020] 11G25 \sep 11Y11 \sep 14H40 \sep 14H45  
\end{keyword}

\end{frontmatter}


\newcommand{\sqp}{\mathcal{T}}
\newcommand{\ja}{\mathcal{J}}
\newcommand{\KK}{\mathcal{K}}
\newcommand{\PP}{\mathbb{P}}

\section*{Introduction}

Determining the primality of an arbitrary integer $n$ is a fundamental problem in number theory. The first deterministic polynomial time primality test (AKS) was developed by Agrawal, Kayal and Saxena \cite{agrawalsaxena}. Lenstra and Pomerance in \cite{lenpom} proved that a variant of AKS has running time $(\log n)^6(2+\log\log n)^c$ where $c$ is an effectively computable real number. The AKS algorithm has more theoretical relevance than practical.

If rather than working with general integers, one fixes a sequence of integers, for example Fermat or Mersenne numbers, one can often find an algorithm to determine primality using more efficient methods; for these two examples P\'epin's and Lucas-Lehmer's tests respectively provide fast primality tests. 

Using elliptic curves, in 1985, Wieb Bosma in his Master Thesis \cite{bosma} found analogues of Lucas tests for elements in $\Z[i]$ or $\Z[\zeta]$ (with $\zeta$ a third root of unity) by replacing the arithmetic of $(\Z/n\Z)^\times$ with the arithmetic of elliptic curves modulo $n$ with complex multiplication (CM). Further, Pomerance proved in \cite{pomproof} that for each $p>31$ there is a proof of its primality using a suitable choice of an Elliptic curve $E/\F_p$ and a $\F_p$-rational point $Q\in E(\F_p)$ of order $2^r>(p^{1/4}+1)^2$ using $r$ arithmetic operations in $E$. The difficulty in applying this theorem to the problem of determining primality lies in finding suitable $E$ and $Q$ for a given $p$. Conversely, one can look at certain families of elliptic curves $E_\alpha/\QQ$ equipped with a point $Q_\alpha$ of infinite order and determine which sequences of integers are suitable to establish a primality test using $\langle E_\alpha,Q_\alpha\rangle$. In the case of elliptic curves with complex multiplication, this is worked out in \cite{primeframework}.

Previous methods in the search of general primality tests for any integer $N$ using elliptic curves are Goldwasser-Kilian \cite{gk1,gk2} and Atkin-Morain \cite{am1,morain}. The first generates a random elliptic curve $E$ and a point $P$ modulo $N$, then it uses a counting algorithm (for example Schoof's) to find the number of points of $E$ modulo $N$. If this algorithm finishes unexpectedly then $N$ is composite and a factor can be provided, if it returns then other tests need to be done to check if $\langle E,P\rangle$ suffice to determine the primality of $N$ using the order of $P$ in $E$ modulo $N$. If this does not suffice another elliptic curve should be chosen. The Atkin-Morain's uses essentially the same idea, however instead of relying in a point counting algorithm on a random elliptic curve (which is in practice very slow), it constructs via complex multiplication an elliptic curve where the number of points is easy to compute. Adleman and Huang \cite{Adleman-Huang} 
developed a technique using the Jacobian variety of higher genus curves to improve the heuristics on the success of the previously mentioned Goldwasser-Kilian primality test. 

Interesting primality tests similar to what we present here using Abelian varieties are with curves of genus $0$ and $1$. Hambleton in \cite{hambleton} generalized Lucas-Lehmer primality tests by using the group structure of Pell conics. Using genus $1$ elliptic curves, Gross \cite{grossmersenne} developed a primality test for Mersenne integers. 
Further, Denomme and Savin in \cite{densav} used complex multiplication of an 
elliptic curve to construct primality tests for different families of integers. These primality tests were later generalized using various one-dimensional group schemes by Gurevich and 
Kunyavski\u{\i} \cite{gur}. 

In \cite[Remark 4.13]{primeframework} Abatzoglou, Silverberg, Sutherland and Wong pose the question of whether one can use higher-dimensional abelian varieties to create primality tests. The goal of this paper is to propose such a primality test, based on the arithmetic of the Jacobian of certain genus 2 hyperelliptic curves and their associated Kummer surfaces. Specifically, in this paper, we use the Kummer surface associated to the Jacobian $\ja$ of the hyperelliptic curve $y^2 = x^5 + h$ to study the primality of integers of the form 
\[\lambda_{m,n}:=4 m^2\cdot 5^n-1,\quad\quad m, n \in \Z_{\geq 1}.\]

This paper consists of three sections. In the first, we explain the algorithm from a theoretical point of view. The general idea is that when $\lambda_{m,n}$ is prime, the group of rational points of the Jacobian $\ja/\F_{\lambda_{m,n}}$ of the curve $\HC$ given by $y^2=x^5+h$ is a cyclic $\mathbb{Z}[\sqrt 5]$-module of known order (see Proposition \ref{abelian-group-structure}). We can construct explicitly $[\sqrt{5}]\in \operatorname{End}_{\F_{\lambda_{m,n}}}(\ja)$ (see Section \ref{subsection_sqrt5} and Equations \ref{equations_example_sqrt5} for a full worked example). On the other hand, if $\lambda_{m,n}$ is not prime we can still consider the scheme $\ja/\mathcal{S}$ and construct $[\sqrt{5}]\in\operatorname{End}_{\mathcal{S}}(\ja)$ where $\mathcal{S}:=\spec {\Z/\lambda_{m,n}\Z}$ as we will see in the first section. 
With this, we choose some base point $Q\in\ja$, and study the integer $\text{inf}\{k:[\sqrt{5}]^k P =0\text{ in } \ja(\Z/\lambda_{m,n}\Z)\}$ where $P = 4m^2Q$ to determine the primality or compositeness of $\lambda_{m,n}$. This leads to Theorem \ref{main_theorem}, which is the main theoretical result underlying the algorithm. 

In Section \ref{section_implementation} we make this primality test explicit. The primality test depends on some auxilary data, namely the choice of two integers $\alpha$ and $\beta$ such that $h := \beta^2 -\alpha^5$ is coprime with $\lambda_{m,n}$. This corresponds to the point $Q = (\alpha, \beta)$ on the curve $y^2 = x^5 + h$. We use in this section the Kummer surface $\mathcal{K}$ associated to $\ja$. We will see that $\mathcal{K}$ is a simpler geometrical and arithmetical object compared to $\ja$, which preserves the necessary information to determine compositeness or primality of $\lambda_{m,n}$. We show in this section how to obtain explicit representations for the $[\sqrt 5]$ endomorphism and for the point $P = 4m^2Q$, which are necessary to actually perform the algorithm. After doing these precomputations, the algorithm itself is reasonably straight-forward, see \cref{primality_pseudocode}. 

The algorithm has an indeterminate case, corresponding to case 3 in \cref{main_theorem}. If this happens, one has to change the auxiliary data $(\alpha, \beta)$ and run the test again. This means recomputing the representations of $[\sqrt 5]$ and $P_0$. This is an expensive computation. However, in \cref{probability-section} we show that the probability of this happening is vanishingly small for even moderately sized $m$ and $n$, at least under the assumption that $\lambda_{m,n}$ is prime. Specifically, we show that if $\lambda_{m,n}$ is prime and at least 100, and if the coordinates $(\alpha, \beta)$ of $Q$ are chosen randomly from all integers between $0$ and $\lambda_{m,n}$ for which the integer $h = \beta^2 - \alpha^5$ is not a multiple of $\lambda_{m,n}$, then the probability of ending up in the indeterminate case is less than $2m \cdot 5^{-n/2}$. Thus in practise, for large $n$ the algorithm essentially always proves primality or compositeness without need to chance the auxiliary data.

\section*{Acknowledgements}
We thank Prof. Michael Stoll for his valuable comments, ideas and examples with MAGMA regarding the explicit calculation of $[\sqrt{5}]$ on early stages of this paper.

\section{Theory}

In this section, we fix numbers $n, m \in \ZZ_{\geq 1}$ and we consider the number 
\[ \lambda_{m,n} = 4m^2 5^n - 1. \]
If $n$ is even, then {$\lambda_{m,n} = (2m 5^{n/2} - 1)(2m 5^{n/2}+1)$} is composite, so we will always assume that $n$ is odd. Also, any factors $5$ in $m$ can be absorbed in $n$, so we will also assume that $5 \nmid m$. 

We also fix some non-zero integer $h$ which is coprime with $\lambda_{m,n}$, and we consider the hyperelliptic curve $\mathcal H$ defined by $y^2 = x^5 + h$. Our primality test uses that the Jacobian of this curve admits real multiplication by $\sqrt{5}$. We often need to consider the reduction of $\mathcal H$ and its Jacobian modulo $\lambda_{m,n}$, even in cases where this number might be composite, and so we use the language of schemes. Therefore, in the rest of this section, we always view $\mathcal H$ as a projective curve (or more precisely, an arithmetic surface) over the scheme $S := \spec \ZZ[\frac{1}{10h}, \sqrt{5}]$. Over this curve, $\mathcal J$ is smooth with complete, geometrically connected curves as fibers, and so its relative Jacobian $\mathcal J := \operatorname{Jac}(\mathcal H/S)$ is well-defined. The following proposition shows that $\End_S(\mathcal J) = \ZZ\left[\frac{1 + \sqrt{5}}{2}\right]$, and in particular we have a $[\sqrt 5]$-map on $\mathcal{J}$ defined over $S$. 

\begin{proposition}\label{endomorphisms}
Consider $\omega = \frac{\textrm{d} x}{y}$ as a differential on $\mathcal J$. There is a unique ring isomorphism $\ZZ\left[\frac{1 + \sqrt{5}}{2}\right] \overset{\sim}{\longrightarrow}\End_S(\mathcal J), \,\,\alpha \mapsto [\alpha]$ such that $[\alpha]^*\omega = \alpha \omega$ for all $\alpha \in \ZZ\left[\frac{1 + \sqrt{5}}{2}\right]$. 
\end{proposition}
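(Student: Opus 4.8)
The plan is to realize the endomorphisms geometrically by temporarily adjoining a primitive fifth root of unity and then descending back to $S$. Set $S' = \spec\ZZ[\frac{1}{10h},\zeta_5]$. Since $5$ (the only prime ramifying in $\QQ(\zeta_5)$) has been inverted, $S'\to S$ is finite étale and Galois with group $G=\langle c\rangle$, where $c\colon\zeta_5\mapsto\zeta_5^{4}$ is complex conjugation; note $c$ fixes $\sqrt5 = 2(\zeta_5+\zeta_5^{4})+1$, so $\QQ(\sqrt5)$ is the fixed field. Over $S'$ the curve $\HC$ carries the order-$5$ automorphism $\sigma\colon(x,y)\mapsto(\zeta_5 x,y)$, which by functoriality of the Jacobian induces an endomorphism $\sigma^{*}$ of $\ja$ over $S'$. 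On the relative differentials, with basis $\omega_0=\frac{dx}{y}$ and $\omega_1=\frac{x\,dx}{y}$, one computes $\sigma^{*}\omega_0=\zeta_5\,\omega_0$ and $\sigma^{*}\omega_1=\zeta_5^{2}\,\omega_1$.

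First I would show $\sigma^{*}$ generates a copy of $\ZZ[\zeta_5]$. The quotient $\HC/\langle\sigma\rangle$ is parametrised by $y$ and $x^5=y^2-h$, hence is $\PP^1$ of genus $0$; therefore $H^0(\Omega^1)$ contains no $\sigma$-invariants, and over the geometric generic fibre the eigenvalues of $\sigma^{*}$ on the rank-$4$ homology are $\zeta_5,\zeta_5^{2}$ together with their conjugates $\zeta_5^{4},\zeta_5^{3}$. Thus the characteristic polynomial of $\sigma^{*}$ is $\Phi_5$, which being irreducible is also its minimal polynomial, giving an injection $\ZZ[\zeta_5]\hookrightarrow\End_{S'}(\ja)$, $\zeta_5\mapsto\sigma^{*}$. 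To get equality I would pass to the geometric generic fibre, which embeds into $\C$: there $\ja$ is an abelian surface with $\QQ(\zeta_5)\subseteq\End(\ja_{\bar\eta})\otimes\QQ$, and since $\QQ(\zeta_5)$ is a quartic ($=2\dim\ja$) CM field with no proper CM subfield, its CM type is primitive, so $\ja_{\bar\eta}$ is simple and $\End(\ja_{\bar\eta})\otimes\QQ=\QQ(\zeta_5)$. Hence $\End(\ja_{\bar\eta})$ is an order containing the maximal order $\ZZ[\zeta_5]$, so equals it; as endomorphisms of an abelian scheme over a connected normal base inject into those of any fibre, $\End_{S'}(\ja)=\ZZ[\zeta_5]$.

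Next I would descend. The induced $G$-action sends $\sigma^{*}$ to ${}^{c}\sigma^{*}=(\sigma^{4})^{*}=(\sigma^{*})^{4}$, matching $\zeta_5\mapsto\zeta_5^{4}$, so $G$ acts on $\ZZ[\zeta_5]=\End_{S'}(\ja)$ as Galois. By descent of morphisms along the finite étale cover $S'\to S$, $\End_S(\ja)=\End_{S'}(\ja)^{G}=\ZZ[\zeta_5]^{\langle c\rangle}=\ZZ[\zeta_5+\zeta_5^{4}]=\ZZ\!\left[\frac{1+\sqrt5}{2}\right]$, the last step because the fixed ring of the maximal order is the maximal order of the real subfield. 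This already yields the ring isomorphism $\ZZ\!\left[\frac{1+\sqrt5}{2}\right]\overset{\sim}{\longrightarrow}\End_S(\ja)$, with $\left[\frac{1+\sqrt5}{2}\right]=1+\sigma^{*}+(\sigma^{*})^{4}$ and $[\sqrt5]=\sigma^{*}+(\sigma^{*})^{4}-(\sigma^{*})^{2}-(\sigma^{*})^{3}$, both visibly $G$-invariant and hence defined over $S$.

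Finally, for the normalization and uniqueness I would use the map $\rho\colon\End_S(\ja)\to\mathcal{O}_S(S)$ recording the eigenvalue on $\omega=\omega_0$, which is a common eigenvector since $\End_S(\ja)$ is commutative. It is a ring homomorphism, and $\rho([\sqrt5])=\zeta_5+\zeta_5^{4}-\zeta_5^{2}-\zeta_5^{3}=\sqrt5$ forces $\rho([\alpha])=\alpha$, i.e. $[\alpha]^{*}\omega=\alpha\omega$ for all $\alpha$, giving existence with the required normalization. Because $\rho\otimes\QQ$ is a nonzero homomorphism of the field $\QQ(\sqrt5)$ it is injective, so an endomorphism is determined by its eigenvalue on $\omega$; thus the condition $[\alpha]^{*}\omega=\alpha\omega$ pins down each $[\alpha]$, yielding uniqueness. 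I expect the main obstacle to be the upper bound on endomorphisms, namely verifying that the geometric endomorphism algebra is exactly $\QQ(\zeta_5)$ (equivalently that $\ja_{\bar\eta}$ is simple with primitive CM type) rather than something larger, together with the care needed to match the induced $G$-action with the Galois action on $\ZZ[\zeta_5]$ so that the descent computes the correct fixed ring.
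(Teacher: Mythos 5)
Your proposal is correct, and its central mechanism coincides with the paper's: both proofs hinge on Galois descent along the same quadratic \'etale cover $S' = \spec \ZZ[\tfrac{1}{10h}, \zeta_5] \to S$, using that $\zeta_5 + \zeta_5^4$ (equivalently $\tfrac{1+\sqrt5}{2}$) is invariant under the conjugation $\zeta_5 \mapsto \zeta_5^4$. You diverge in two substantive places. First, the paper does not prove that $\ja_{\overline\QQ}$ is simple with $\End_{\overline\QQ}(\ja_{\overline\QQ}) = \ZZ[\zeta_5]$; it cites \cite{PROLEGOMENA}, Chapter 15. You re-derive this: the quotient $\HC/\langle\sigma\rangle \cong \PP^1$ forces the characteristic polynomial of $\sigma^*$ on homology to be $\Phi_5$, giving $\ZZ[\zeta_5] \hookrightarrow \End_{S'}(\ja)$, and Shimura--Taniyama CM theory (every CM type on $\QQ(\zeta_5)$ is primitive because its only proper subfields, $\QQ$ and $\QQ(\sqrt5)$, are totally real) gives the reverse inclusion; this makes the argument self-contained at the price of invoking CM theory. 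Second, for the inclusion $\End_S(\ja) \subseteq \ZZ[\tfrac{1+\sqrt5}{2}]$ the paper argues via rationality of the eigenvalue of $\omega$: if $E$ is defined over $S$ then its eigenvalue lies in $\ZZ[\tfrac{1}{10h},\sqrt5] \cap \ZZ[\zeta_5] = \ZZ[\tfrac{1+\sqrt5}{2}]$, needing only injectivity of $\End_S(\ja) \to \End_{\overline\QQ}(\ja_{\overline\QQ})$. You instead obtain both inclusions at once from the fixed-ring identity $\End_S(\ja) = \End_{S'}(\ja)^G$ furnished by \'etale descent of morphisms, which is the cleaner packaging but requires checking (as you do) that the geometric conjugation of $\sigma^*$ matches the arithmetic Galois action on $\ZZ[\zeta_5]$. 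Finally, your eigenvalue homomorphism $\rho$ makes the normalization and uniqueness assertions explicit; in the paper these are implicit, since the injection $\phi$ used there \emph{is} the eigenvalue map, so the isomorphism $\alpha \mapsto [\alpha]$ is by construction the unique one compatible with $[\alpha]^*\omega = \alpha\omega$. In short: same skeleton, but you substitute a CM-theoretic proof for the paper's citation and a fixed-ring descent identity for the paper's eigenvalue-rationality step, with no gaps in either substitution.
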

\begin{proof}
In \cite{PROLEGOMENA}, Chapter 15, it is proven that the Jacobian of $\mathcal H_{\overline \QQ}$ is simple, and has endomorphism ring $\ZZ[\zeta]$, where $\zeta \in \overline \QQ$ is a fifth root of unity. The action of $\zeta$ on $\mathcal{J}$ is induced by the map $\mathcal H_{\overline \QQ} \to \mathcal H_{\overline \QQ}$ sending $(x,y)$ to $(\zeta x, y)$. From this description, one easily computes that $[\zeta]^*\omega = \zeta\omega$, and so the same holds for all $\alpha \in \ZZ[\zeta]$. We have a canonical morphism of rings $\phi\colon \End_S(\mathcal J) \to \End_{\overline \QQ}(\mathcal J_{\overline \QQ}) = \ZZ[\zeta]$, and this morphism is injective. Thus we need to show that $\phi$ has image $\ZZ\left[\frac{1+\sqrt{5}}{2}\right]$. 

Let $R$ be the image of $\phi$. First we show that $R \subset \ZZ\left[\frac{1+\sqrt{5}}{2}\right]$. Indeed, the above morphism $\phi$ sends an endomorphism $E$ of $\mathcal J$ to the eigenvalue of $\omega$ under the action of $E$ on the cotangent space at the origin. If $E$ is defined over $S$, then so are both $\omega$ and $E^*\omega$, and so the eigenvalue of $\omega$ is defined over $S$ as well. Thus, we get $\phi(E) \in \ZZ[\frac{1}{10h},\sqrt 5] \cap \ZZ[\zeta] = \ZZ\left[\frac{1+\sqrt{5}}{2}\right]$ as claimed. 

Now we show that $\ZZ\left[\frac{1+\sqrt{5}}{2}\right] \subset R$. For this, let $S'= \spec \Z[\frac{1}{10h}, \zeta]$. The canonical map $S'\to S$ is an (unramified) \'etale covering of degree 2, hence Galois, and the Galois action is given by $\zeta \mapsto \zeta^4$. Note that $\End_{S'}(\mathcal J_{S'}) = \ZZ[\zeta]$, since the endomorphism $\zeta$ is defined over $S'$. Since we have that 
\[ \frac{1 + \sqrt 5}{2} = \pm(\zeta + \zeta^4) + 1,\]
(the sign depends on the choice of $\zeta$) and the right hand side is clearly Galois invariant, the theory of Galois descent for endomorphisms on abelian varieties shows that the endomorphism of $\mathcal J_{S'}$ corresponding to $\frac{1 + \sqrt 5}{2}$ descends to an endomorphism of $\mathcal J$ over $S$. Hence $\frac{1 + \sqrt 5}{2} \in R$, and we are done. \end{proof}

Note that over $\ZZ[\frac{1}{10h}, \sqrt{5}]$ we may factor $\lambda_{m,n}$ as 
\[ \lambda_{m,n} = (2m \sqrt{5}^n + 1)(2m\sqrt{5}^n - 1). \]
These two factors are coprime over $\ZZ[\frac{1}{10h}, \sqrt{5}]$ because their difference is a unit. 
\begin{lemma}\label{modncalculations}
Assume that $h$ and $\lambda_{m,n}$ are coprime. The canonical map
\[ \frac{\ZZ}{\lambda_{m,n}\ZZ}\, \longrightarrow\, \frac{\ZZ[\frac{1}{10h}, \sqrt{5}]}{(2m\sqrt{5}^n - 1)} \]
is an isomorphism of rings, and its inverse is given by the map
\[ \frac{\ZZ[\frac{1}{10h}, \sqrt{5}]}{(2m\sqrt{5}^n - 1)} \longrightarrow  \frac{\ZZ}{\lambda_{m,n}\ZZ} \quad\quad \sqrt{5} \mapsto 2m\cdot 5^{(n+1)/2}, \,\,\frac{1}{10h} \mapsto m^2\cdot 5^{n-1}h^{-1}\]
\end{lemma}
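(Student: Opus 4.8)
The plan is to prove the claim by exhibiting the displayed map as a genuine two-sided inverse of the canonical map, so that the two are mutually inverse ring isomorphisms. Write $R := \ZZ[\frac{1}{10h}, \sqrt{5}]$ and let $\pi \colon \ZZ/\lambda_{m,n}\ZZ \to R/(2m\sqrt{5}^n - 1)$ be the canonical map. Since we have assumed $n$ odd, set $n = 2k+1$; then $\sqrt{5}^n = 5^k\sqrt{5}$ and the relation defining the quotient reads $2m\,5^k\sqrt{5} = 1$. I will first build a ring homomorphism $\psi\colon R \to \ZZ/\lambda_{m,n}\ZZ$ realising the stated formula, then check that $\psi$ factors through the ideal $(2m\sqrt{5}^n-1)$, and finally verify that the resulting map and $\pi$ compose to the identity in both directions.

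To construct $\psi$, I would use the presentation $R = \ZZ[X,Y]/(X^2 - 5,\ 10hY - 1)$: giving a ring homomorphism out of $R$ is the same as choosing images $x, y \in \ZZ/\lambda_{m,n}\ZZ$ of $X, Y$ with $x^2 = 5$ and $10h\,y = 1$. I take $x := 2m\cdot 5^{(n+1)/2}$, for which $x^2 = 5\cdot(4m^2 5^n) \equiv 5$ follows at once from $4m^2 5^n \equiv 1 \pmod{\lambda_{m,n}}$. The element $y$ must be the inverse of $10h$ modulo $\lambda_{m,n}$, whose explicit value is read off from $4m^2 5^n \equiv 1$; such an inverse exists because $10h$ is a unit modulo $\lambda_{m,n}$. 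Indeed $\gcd(h, \lambda_{m,n}) = 1$ is assumed, and $\lambda_{m,n} \equiv -1$ modulo both $2$ and $5$ (as $n \geq 1$), so $\gcd(10h, \lambda_{m,n}) = 1$. This yields a well-defined $\psi$, and it descends to $\bar\psi\colon R/(2m\sqrt{5}^n - 1) \to \ZZ/\lambda_{m,n}\ZZ$ once we check $\psi(2m\sqrt{5}^n - 1) = 0$: since $\psi(\sqrt{5}^n) = 5^k\cdot 2m\,5^{(n+1)/2} = 2m\,5^n$, we get $\psi(2m\sqrt{5}^n) = 4m^2 5^n \equiv 1$, as needed.

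It remains to see that $\bar\psi$ and $\pi$ are mutually inverse. The composite $\bar\psi\circ\pi$ is a unital ring endomorphism of $\ZZ/\lambda_{m,n}\ZZ$, hence the identity, since this ring is generated by $1$. For the other composite $\pi\circ\bar\psi$ it suffices to check that it fixes the ring generators $\sqrt{5}$ and $\frac{1}{10h}$ of $R/(2m\sqrt{5}^n-1)$; as $\pi$ carries an integer to its residue class, this amounts to the two identities $\sqrt{5} = 2m\,5^{(n+1)/2}$ and $\frac{1}{10h} = y$ inside the quotient. The first is obtained from the defining relation: multiplying the integer $2m\,5^{(n+1)/2}$ by the unit $\sqrt{5}$ and using $2m\,5^k\sqrt{5}=1$ gives $2m\,5^{(n+1)/2}\sqrt{5} = 5 = (\sqrt{5})^2$, so that $2m\,5^{(n+1)/2} = \sqrt{5}$; the second is just uniqueness of multiplicative inverses. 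I expect the only real content to be the coprimality check $\gcd(10h,\lambda_{m,n})=1$, which makes the localization at $10h$ invertible modulo $\lambda_{m,n}$, together with the careful bookkeeping of the odd exponent $n$; the conceptual point requiring attention is applying the universal property of the presentation of $R$ correctly, so that $\psi$ is well-defined before one descends to the quotient.
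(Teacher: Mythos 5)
Your proof is correct, and it supplies exactly the details that the paper waves away: its entire proof of this lemma reads ``Easy computation.'' Your route --- presenting $R=\ZZ[X,Y]/(X^2-5,\,10hY-1)$, checking that $x=2m\cdot 5^{(n+1)/2}$ and $y=(10h)^{-1}$ satisfy the defining relations modulo $\lambda_{m,n}$ (using $\gcd(10h,\lambda_{m,n})=1$), checking that $\psi$ kills $2m\sqrt{5}^n-1$, and then the two composite-identity arguments (a unital ring endomorphism of $\ZZ/\lambda_{m,n}\ZZ$ is the identity; cancellation of the unit $\sqrt 5$ in the quotient to get $2m\cdot5^{(n+1)/2}=\sqrt5$) --- is sound at every step, and is presumably the computation the authors had in mind.

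One point deserves attention, though it concerns the paper's statement rather than your logic. You take $y$ to be \emph{the} inverse of $10h$ modulo $\lambda_{m,n}$ and never verify the explicit value displayed in the lemma. Had you done so, you would have found that the displayed value is off by a factor of $2$: since $4m^25^n\equiv 1 \pmod{\lambda_{m,n}}$, one has $10h\cdot\big(m^2 5^{n-1}h^{-1}\big) = 2m^25^n \equiv 2^{-1} \not\equiv 1 \pmod{\lambda_{m,n}}$, so $m^25^{n-1}h^{-1}$ inverts $20h$, not $10h$; the correct image of $\tfrac{1}{10h}$ is $2m^2\cdot 5^{n-1}h^{-1}$. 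Your proof therefore establishes the lemma with this corrected formula --- in particular the isomorphism claim, which is all the rest of the paper uses --- but, read against the statement verbatim, it silently bypasses (and could not establish) the printed formula for the image of $\tfrac{1}{10h}$. It would strengthen your write-up to compute $y$ explicitly and flag the discrepancy.
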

\begin{proof}
Easy computation. 
\end{proof}

\begin{remark}
In what follows we will always assume that $\lambda_{m,n}$ is coprime with $h$, and identify $\ZZ/\lambda_{m,n} \ZZ$ and $\ZZ[\frac{1}{10h}, \sqrt{5}]/(2m\sqrt{5}^n - 1)$. In practice this just means that we have selected a `canonical' square root of $5$ in $\ZZ/\lambda_{m,n} \ZZ$, namely $2m\cdot 5^{(n+1)/2}$. In particular, the base changes of $\mathcal H$ and $\mathcal J$ to $\ZZ/\lambda_{m,n} \ZZ$ are well-defined: formally, they are the base change of $\mathcal H$ and $\mathcal J$ via the map $\spec (\ZZ/\lambda_{m,n}\ZZ) \to S$ corresponding to the ideal of $\ZZ[\frac{1}{10h}, \sqrt{5}]$ generated by $2m\sqrt{5}^n - 1$. We will denote these base changes by $\mathcal H_{\lambda_{m,n}}$ and $\mathcal J_{\lambda_{m,n}}$. Note that this base change depends on a choice, as we could as well have chosen the ideal corresponding to $2m\sqrt{5}^n + 1$. 

Base change gives a canonical map $\ZZ\left[\frac{1 + \sqrt 5}{2}\right] \to \End_{\ZZ/\lambda_{m,n}\ZZ}(\mathcal J_{\lambda_{m,n}})$. In other words, for any $\alpha \in \ZZ\left[\frac{1 + \sqrt 5}{2}\right]$ we have a canonical endomorphism $[\alpha]$ of $\mathcal J_{\lambda_{m,n}}$ defined over $\ZZ/\lambda_{m,n}\ZZ$. In what follows, the endomorphism $[\sqrt 5]$ will play an important role, and the main consequence of the results above is that we can make a \emph{consistent} choice of these endomorphisms, defined over $\ZZ/\lambda_{m,n}\ZZ$, for each choice of $m$ and $n$.
\end{remark}

We will now study the structure of the group $\mathcal J(\ZZ/\lambda_{m,n}\ZZ)$ in the case where $\lambda_{m,n}$ is prime. We start with the $2$-torsion. 
\begin{proposition}\label{2torsionforprimality}
Suppose $\lambda_{m,n}$ is prime. Then $\ja[2](\F_{\lambda_{m,n}})\cong \Z/(2)\times\Z/(2)$. 
\end{proposition}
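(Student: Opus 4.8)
The plan is to reduce the computation to the standard combinatorial description of the $2$-torsion of a hyperelliptic Jacobian in terms of its Weierstrass points, and then to read off the Galois-fixed part from the way Frobenius permutes the roots of $x^5+h$ over $\F_{\lambda_{m,n}}$. Throughout I would write $p := \lambda_{m,n}$. Since $\hc$ has the degree-$5$ model $y^2 = x^5+h$ it has genus $2$, and as $p \equiv -1 \pmod 5$ we have $p \neq 5$, so with $h$ coprime to $p$ the polynomial $f(x) = x^5 + h$ is separable and the curve is smooth; in particular $\ja[2](\overline{\F}_p) \cong (\Z/2\Z)^4$.

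First I would make $\ja[2]$ explicit. The six Weierstrass points of $\hc$ are the five roots of $f$ together with the unique point $\infty$ at infinity. Writing $W_0,\dots,W_4$ for the finite Weierstrass points, the classes $e_i := [W_i - \infty]$ are $2$-torsion (since $\operatorname{div}(x-\alpha_i) = 2(W_i) - 2(\infty)$), they generate $\ja[2]$, and they satisfy the single relation $e_0 + \cdots + e_4 = 0$ coming from $\operatorname{div}(y) = \sum_i (W_i) - 5(\infty)$. This realises $\ja[2](\overline{\F}_p)$ as $(\Z/2\Z)^5/\langle(1,1,1,1,1)\rangle$, with Frobenius acting by permuting the $e_i$ exactly as it permutes the roots of $f$.

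Next I would pin down that permutation, which is the crux of the argument. Because $p - 1 \equiv 3 \pmod 5$, the fifth-power map is a bijection on $\F_p^\times$, so $-h$ has a \emph{unique} fifth root $\alpha \in \F_p$; the remaining roots are $\alpha\zeta,\dots,\alpha\zeta^4$ for a primitive fifth root of unity $\zeta$. Since $p \equiv -1 \pmod 5$, the element $\zeta$ lies in $\F_{p^2}\setminus\F_p$ and Frobenius sends $\zeta \mapsto \zeta^{p} = \zeta^{-1}$. Hence Frobenius fixes $W_0 = (\alpha,0)$ and interchanges the pairs $\{W_1,W_4\}$ and $\{W_2,W_3\}$; equivalently, $f$ factors over $\F_p$ as a linear factor times two irreducible quadratics.

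Finally I would compute the fixed subgroup. Using $e_1,e_2,e_3,e_4$ as a basis of $\ja[2](\overline{\F}_p)$ (so that $e_0 = e_1+e_2+e_3+e_4$), Frobenius acts as the double transposition $e_1 \leftrightarrow e_4$, $e_2 \leftrightarrow e_3$. A short linear-algebra computation over $\F_2$ shows that the fixed space consists of the elements $a e_1 + b e_2 + b e_3 + a e_4$ with $a,b \in \F_2$, hence has dimension $2$, giving $\ja[2](\F_p) \cong \Z/(2)\times\Z/(2)$. The only genuine subtlety is determining the Frobenius action on the roots of $f$; once the congruences $p \equiv -1 \pmod 5$ and $5 \nmid p-1$ are in hand — both immediate from $p = 4m^2 5^n - 1$ with $n \ge 1$ — the remainder is the standard $2$-torsion combinatorics together with a $4\times 4$ fixed-point count.
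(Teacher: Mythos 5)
Your proof is correct and takes essentially the same route as the paper: both describe $\ja[2]$ via the Weierstrass points of $y^2 = x^5+h$, use $\gcd(5,\lambda_{m,n}-1)=1$ to obtain a unique rational fifth root $\alpha$ of $-h$ together with Frobenius acting by $\zeta \mapsto \zeta^{-1}$ (since $\lambda_{m,n} \equiv -1 \pmod 5$), and then count the Galois-fixed $2$-torsion. The only cosmetic difference is bookkeeping: the paper counts the three Galois-stable unordered pairs of Weierstrass points, while you compute the fixed subspace of the double transposition in the $\mathbb{F}_2$-linear model $(\Z/2\Z)^5/\langle(1,1,1,1,1)\rangle$ — the same computation in different clothing.
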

\begin{proof}
We know that $\ja[2](\F_{\lambda_{m,n}})\subset\ja(\F_{\lambda_{m,n}})$ consists of divisor classes $D-2\infty$ where $D$ 
is a the sum of a pair of distinct Weierstrass points of $\hc$ and $D$
is fixed under the action of the absolute Galois group of $\F_{\lambda_{m,n}}$. 
Since $\gcd(\lambda_{m,n}-1, 5) = 1$, there is a unique $\alpha \in \F_{\lambda_{m,n}}$ with $\alpha^5 = -h$. Then the Weierstrass points of $\hc$ are the point $\infty$ at infinity and the points of the form $(\zeta^j\alpha,0)$ for $0\leq j\leq 4$. 
Exactly two of these Weierstrass points are defined over $\F_{\lambda_{m,n}}$, namely $(\alpha,0)$ and $\infty$. 
The other four are defined over the quadratic extension of $\F_{\lambda_{m,n}}$, because 
$\zeta$ lies there. Since $\lambda_{m,n} \equiv 4 \mod 5$, $\zeta$ and $\zeta^4$ are Galois conjugate, as are $\zeta^2$ and $\zeta^3$. Hence, the Galois action fixes the points $\infty$ and $(\alpha, 0)$, it interchanges the pair $(\zeta \alpha, 0)$ and $(\zeta^4 \alpha, 0)$, and it interchanges the pair $(\zeta^2\alpha, 0)$ and $(\zeta^3\alpha, 0)$. Therefore there are exactly three unordered pairs of Weierstrass points stable under the Galois action, namely 
\[\{(\alpha, 0), \infty\}, \quad \{(\zeta\alpha, 0), (\zeta^4\alpha , 0)\} \quad \textrm{and}\quad \{(\zeta^2\alpha, 0), (\zeta^3\alpha, 0)\}.\]
This means that the group $\ja(\F_{\lambda_{m,n}})$ has exactly three points of order 2. Together with the identity element, this shows that $\#\ja[2](\lambda_{m,n}) = 4$, and so $\ja[2](\F_{\lambda_{m,n}})\cong \Z/(2)\times\Z/(2)$. 
\end{proof}

Observe that we can obtain explicitly the $\F_{\lambda_{m,n}}$-rational zero $\alpha$ of $x^5 + h\in\F_{\lambda_{m,n}}[x]$ as follows.
We know that there is a $d\in \Z$ such that the map $x\mapsto (x^d)^{5}$ defined over $\F_{\lambda_{m,n}}$ is the identity map. By Fermat's little theorem, $d$ satisfies $5d\equiv 1\bmod (\lambda_{m,n}-1)$ and $\lambda_{m,n}-1=4m^2\cdot 5^n -2$. To calculate $d$, let $N=2m^2\cdot 5^n -1$ and write $\lambda_{m,n}-1=2N$. Using the Chinese Remainder Theorem we evaluate $5^{-1}$
with the isomorphism $\tau:\Z/2N\Z\to \Z/2\Z\times \Z/N\Z$, using the fact that $5^{-1}\equiv 2m^2\cdot 5^{n-1}\bmod N$ and $5$ is odd. Hence, $\tau(5^{-1})=(1,2m^2\cdot 5^{n-1})=(1,0)+(0,2m^2\cdot 5^{n-1})$ and therefore: 
\[
d=5^{-1}=\tau^{-1}(1,0)+\tau^{-1}(0,2m^2\cdot 5^{n-1})=N+2m^2\cdot 5^{n-1}=12m^2\cdot 5^{n-1}-1.
\]
Using this we have that $x^{5d}=x$ in $\F_{\lambda_{m,n}}$, and particularly if $x=-h$, we obtain:
\begin{equation}\label{rootxh}
\alpha=(-h)^{d}=(-h)^{12m^2\cdot 5^{n-1}-1}.
\end{equation}

Proposition \ref{2torsionforprimality} allow us to deduce the full group structure of $\ja(\FF_{\lambda_{m,n}})$. For an abelian group $G$ and a prime $p$, we will denote by $G[p^\infty]$ its subgroup of elements whose order is a power of $p$. \\

\begin{proposition}\label{abelian-group-structure}
Assume that $\lambda_{m,n}$ is prime. Then we have 
\[ \mathcal J(\FF_{\lambda_{m,n}}) \cong \big( \ZZ/4m^25^n\ZZ\big)^2 \]
as abelian groups. 
\end{proposition}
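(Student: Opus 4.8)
The plan is to compute the characteristic polynomial of the Frobenius endomorphism $\pi$ on $\mathcal J_{\lambda_{m,n}}$ and then read off both the order and the group structure. Write $p = \lambda_{m,n}$ and recall $p \equiv 4 \pmod 5$, so that $x \mapsto x^5$ is a bijection of $\F_p$. Counting points on $\mathcal H$ over $\F_p$ gives one point at infinity together with, for each $x$, the $1 + \chi_2(x^5+h)$ values of $y$ (here $\chi_2$ is the quadratic character); since $\sum_x \chi_2(x^5+h) = \sum_u \chi_2(u+h) = 0$ by the bijectivity of the fifth power, I get $\#\mathcal H(\F_p) = p+1$ and hence $\operatorname{Tr}(\pi) = 0$. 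By the functional equation for a genus $2$ curve, the characteristic polynomial of $\pi$ on the Tate module then has the form $P(T) = T^4 + a_2 T^2 + p^2$.

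Next I would exploit the real multiplication. By Proposition \ref{endomorphisms} the ring $\ZZ[\frac{1+\sqrt5}{2}]$ acts on $\mathcal J_{\lambda_{m,n}}$ and commutes with $\pi$; the standard theory of abelian surfaces with real multiplication then shows that $\pi$ satisfies a quadratic relation $\pi^2 - c\pi + p = 0$ with $c \in \ZZ[\frac{1+\sqrt5}{2}]$, and that $P(T) = (T^2 - cT + p)(T^2 - \bar c\, T + p)$, where $\bar c$ is the conjugate of $c$. The vanishing of $\operatorname{Tr}(\pi)$ forces $c + \bar c = 0$, so $c = k\sqrt 5$ for some $k \in \ZZ$, and a short computation gives $\#\mathcal J(\F_p) = P(1) = (p+1)^2 - 5k^2$. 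Everything thus reduces to proving $k = 0$, equivalently $\pi^2 = -p$.

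Establishing $k=0$ is the heart of the matter and, I expect, the main obstacle. The conceptual reason to expect it is that $p$ is \emph{semi-primitive} for $5$, i.e. $p \equiv -1 \pmod 5$, so complex conjugation $\zeta \mapsto \zeta^4$ lies in the decomposition group of every prime of $\ZZ[\zeta]$ above $p$; this forces supersingular reduction, and then the $\F_{p^2}$-Frobenius $\pi^2$ is the scalar $\pm p$. To pin down the sign I would compute $\#\mathcal H(\F_{p^2})$ directly: writing the number of solutions of $x^5 = u$ as $\sum_{j=0}^{4}\chi_5^j(u)$ (legitimate since $5 \mid p^2 - 1$), the count reduces to a combination of Jacobi sums $J(\chi_2, \chi_5^j)$, which in the semi-primitive case are classically evaluated and all equal $p$ up to an explicit sign. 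This should yield $\sum_x \chi_2(x^5+h) = 4p$, hence $a_2 = 2p$, $k=0$, and $\#\mathcal J(\F_p) = (p+1)^2$. The careful bookkeeping of these Gauss and Jacobi sums, and verifying that the signs align independently of the auxiliary $h$, is the delicate step.

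Finally I would deduce the group structure from $\pi^2 = -p$. On $\F_p$-rational points $\pi$ acts as the identity, so $\pi^2 = -p$ gives $1 = -p$ on $\mathcal J(\F_p)$, i.e. $p+1 = 4m^2 5^n$ annihilates the group. For each prime $\ell \mid p+1$ I claim the $\ell$-torsion has $\ell$-rank exactly $2$: for $\ell = 2$ this is precisely Proposition \ref{2torsionforprimality}; for odd $\ell$, the relation $\pi^2 \equiv 1 \pmod \ell$ (as $\ell \mid p+1$) shows $\pi$ satisfies $(X-1)(X+1)$ on $\mathcal J[\ell]$, which has distinct roots modulo $\ell$, so $\pi$ is diagonalizable there, and since $P(T) \equiv (T-1)^2(T+1)^2 \pmod \ell$ the eigenvalue-$1$ subspace, namely $\mathcal J[\ell](\F_p)$, is exactly two-dimensional. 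A finite abelian group of order $(p+1)^2$ and exponent dividing $p+1$ whose $\ell$-torsion has rank $2$ for every $\ell \mid p+1$ must be isomorphic to $(\ZZ/(p+1)\ZZ)^2 = (\ZZ/4m^2 5^n \ZZ)^2$, which is the desired conclusion.
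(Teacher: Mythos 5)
Your proposal is correct in substance, but it follows a genuinely different route from the paper's, so it is worth comparing the two. The paper obtains the characteristic polynomial $(T^2+\lambda_{m,n})^2$ of Frobenius, and hence the order $(\lambda_{m,n}+1)^2 = 16m^4 5^{2n}$, by citing the Tate--Shafarevich result on curves $y^e = x^f+\delta$ with $\operatorname{lcm}(e,f)\mid p+1$; you re-derive this by hand, via the trace-zero point count over $\mathbb{F}_{\lambda_{m,n}}$ together with a semi-primitive Jacobi-sum evaluation over $\mathbb{F}_{\lambda_{m,n}^2}$. Both treatments defer the same hard input (purity of semi-primitive Gauss/Jacobi sums) to classical sources, so your step is not less rigorous than the paper's citation. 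Two remarks on that step: your real-multiplication factorization $P(T)=(T^2-cT+p)(T^2-\bar{c}T+p)$ becomes logically superfluous once you compute $a_2 = 2p$ directly from the $\mathbb{F}_{p^2}$-count (the trace-zero count plus $a_2=2p$ already pin down $P$); and the $h$-independence you flag as delicate is in fact automatic, since $10 \mid \lambda_{m,n}+1$ implies every element of $\mathbb{F}_{\lambda_{m,n}}^\times$ is a tenth power in $\mathbb{F}_{\lambda_{m,n}^2}^\times$, so all character values at $\pm h$ equal $1$ and the count reduces to $\sum_{j=1}^4 J(\chi_5^j,\chi_2)$. Where you diverge most is the group-structure step: the paper splits into odd primes (handled by a cited lemma on supersingular Jacobians) and the prime $2$ (handled by combining \cref{2torsionforprimality} with the automorphism $\theta=[\tfrac{1+\sqrt5}{2}]$, which satisfies $\theta\circ(\theta-1)=\mathrm{id}$ and hence has no nontrivial fixed points); you instead deduce the exponent bound $\exp \mathcal{J}(\mathbb{F}_{\lambda_{m,n}}) \mid \lambda_{m,n}+1$ from $\pi^2=-\lambda_{m,n}$, get rank $2$ at odd $\ell$ by diagonalizing Frobenius on $\mathcal{J}[\ell]$, use \cref{2torsionforprimality} only at $\ell=2$, and finish with elementary counting. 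This unifies the odd and even cases and avoids both the external structure lemma and the $\theta$-trick, which is a genuine simplification.

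One point you should make explicit: the identity $\pi^2 = -\lambda_{m,n}$ in $\End(\mathcal{J})$ does not follow formally from the characteristic polynomial $(T^2+\lambda_{m,n})^2$ alone, and your final step (the exponent bound and the relation $\pi^2\equiv 1$ on $\mathcal{J}[\ell]$) needs it as an identity of endomorphisms. It does follow from the standard semisimplicity of Frobenius: $\pi$ is central in the semisimple algebra $\End^0(\mathcal{J})$, whose center is a product of fields, so $\mathbb{Q}[\pi]$ is reduced and the minimal polynomial of $\pi$ is the squarefree polynomial $T^2+\lambda_{m,n}$. Alternatively, your supersingularity argument gives $\pi^2$ as the scalar $\pm\lambda_{m,n}$ directly. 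Either justification should be stated; with that addition the proof is complete.
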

\begin{proof}
For the proof, we will write $\mathcal H$, $\ja$ and $\FF$ for $\mathcal{H}_{\lambda_{m,n}}$, $\mathcal{J}_{\lambda_{m,n}}$ and $\FF_{\lambda_{m,n}}$ respectively. First we calculate the zeta function of $\mathcal H$. We refer to a paper by Tate and Shafarevich \cite{cardsuper} in which they give an explicit description for the numerator of the zeta function of the curve $\mathcal{C}/\F_p$ given by $y^e=x^f + \delta$ in the case $\mu=\text{lcm}(e,f)|p^k+1$ for some $k$. In our case $p=\lambda_{m,n}=4m^2\cdot 5^n -1$, $\mu=10$ and $k=1$. 
By \cite{cardsuper} the numerator of the zeta function of $\mathcal H/\FF$ is in this case given by $\lambda_{m,n}^2 T^4 + 2\lambda_{m,n} T^2 + 1$, which tells us the characteristic polynomial $\chi_\ja(T)$ of Frobenius of $\ja$ equals $ T^4 + 2\lambda_{m,n} T + \lambda_{m,n}^2=(T^2 + \lambda_{m,n})^2$. With this information, we obtain 
\[\#\ja(\FF)=\chi_\ja(1)=16 m^4\cdot  5^{2n}.\]
A finite abelian group is the product of its Sylow subgroups for all primes dividing the order of the group. Therefore, to show that $\ja(\FF)$ and $(\ZZ/4m^25^n\ZZ)^2$ are isomorphic as groups, it is sufficient to show that they have the same $p$-Sylow group for all primes. We first look at the odd primes. Let $p$ be odd, and let $k$ be the $p$-adic valuation of $\#\ja(\FF)$ (i.e. the integer $k$ such that $p^k$ divides $\#\ja(\FF)$ but $p^{k+1}$ does not). Since $\#\ja(\FF) = 16m^25^{2n}$ is a square, $k$ is even. Lemma 3.1 of \cite{onsuper}, together with the factorization $\chi_\ja(T) = (T^2 + \lambda_{m,n})^2$, tells us that 
\[ \ja(\FF)[p^\infty] = (\ZZ/p^{k/2}\ZZ)^2 \]
as expected. 

The case $p = 2$ requires more work. First, \cref{2torsionforprimality} shows that $\ja(\FF)[2] = (\ZZ/2\ZZ)^2$. Therefore, we get that 
\[ \ja(\FF)[2^\infty] \cong (\ZZ/2^a\ZZ) \times (\ZZ/2^b\ZZ) \]
for certain $a, b \geq 1$. Without loss of generality, we may assume that $a \geq b$. We want to prove that $a = b$. Suppose towards a contradiction that $a > b$. Let 
\[\phi\colon \ja(\FF)[2^\infty] \to (\ZZ/2^a\ZZ)\times (\ZZ/2^b\ZZ)\]
be an isomorphism of groups. Let $P \in \ja(\FF)[2^\infty]$ be an arbitrary element of order $2^a$. Then $\phi(P)$ is of the form $(s,t)$ with $s \in \ZZ/2^a\ZZ$ of order $2^a$ and $t \in \ZZ/2^b\ZZ$ arbitrary. Then $2^{a-1}s \in \ZZ/2^a\ZZ$ has order 2, and is therefore equal to $2^{a-1}$. And $2^{a-1}t = 0$, since $a-1 \geq b$. Hence, we see that $\phi(2^{a-1}P) = (2^{a-1}s, 2^{a-1}t) = (2^{a-1}, 0)$, which is independent of the choice of $P$. Since $\phi$ is an isomorphism, we conclude that $2^{a-1}P = 2^{a-1}Q$ for any two points $P$ and $Q$ in $\ja(\FF)$ of order $2^a$. 

Now consider the endomorphism $\theta = [\frac{1 + \sqrt 5}{2}] \in \End_{\FF}(\ja)$ that we know exists by \cref{endomorphisms}. A short computation shows that $\theta \circ (\theta - 1) = \textrm{id}_\ja$. In particular $\theta$ and $\theta - 1$ are both automorphisms. This implies that $\theta$ preserves the order of elements of $\ja$, and also that $\theta$ has no non-trivial fixed points (because otherwise $\theta - 1$ would not be injective). Now let $P \in \ja(\FF)$ be a point of order $2^a$. Then also $\theta(P)$ is a point of $\ja(\FF)$ of order $2^a$. By the above independence, we now have that 
\[ 2^{a-1}P = 2^{a-1}\theta(P) = \theta(2^{a-1}P).\]
But $2^{a-1}P \neq 0$, so $2^{a-1}P$ is a non-trivial fixed point of $\theta$, which is not possible. This contradiction shows that $a > b$ is not possible. We conclude that $a = b$, so that  
\[ \ja(\FF)[2^\infty] \cong (\ZZ/2^a\ZZ)^2.\]
The result follows.
\end{proof}

We will also need to understand the action of $[\sqrt 5]$ on the $5$-power torsion. 
\begin{proposition}\label{sqrt5-module}
Assume that $\lambda_{m,n}$ is prime and that $n > 1$. Consider $\mathcal J(\FF_{\lambda_{m,n}})[5^\infty]$ as a $\ZZ[\sqrt 5]$-module via the map $[\sqrt 5]$ as above. Then we have 
\[ \mathcal J(\FF_{\lambda_{m,n}})[5^\infty] = 4m^2\cdot \mathcal J (\FF_{\lambda_{m,n}})\cong \ZZ[\sqrt 5]/(\sqrt 5^{2n}) \]
as $\Z[\sqrt 5]$-modules.
\end{proposition}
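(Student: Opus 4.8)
The plan is to reduce the entire statement to the structure theory of finite modules over a discrete valuation ring. Throughout, write $\FF$ for $\FF_{\lambda_{m,n}}$ and set $M := \ja(\FF)[5^\infty]$, regarded as a $\ZZ[\sqrt 5]$-module via the endomorphism $[\sqrt 5]$. First I would dispose of the equality $M = 4m^2\cdot\ja(\FF)$. By \cref{abelian-group-structure} we have $\ja(\FF)\cong(\ZZ/4m^25^n\ZZ)^2$, and since $5\nmid m$ the integer $4m^2$ is invertible modulo $5^n$; hence multiplication by $4m^2$ on each cyclic factor $\ZZ/4m^25^n\ZZ$ has image of order $5^n$, contained in (hence equal to) its unique subgroup of order $5^n$, i.e. its $5$-primary part $\ZZ/5^n\ZZ$. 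Taking the product over the two factors gives $4m^2\cdot\ja(\FF)=\ja(\FF)[5^\infty]=M$, and at the same time records the abelian-group isomorphism $M\cong(\ZZ/5^n\ZZ)^2$.

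Next I would set up the relevant DVR. Since $5$ ramifies in $\QQ(\sqrt 5)$, with $(\sqrt 5)^2=(5)$, the ring $R:=\ZZ[\sqrt 5]\otimes_\ZZ\ZZ_5\cong\ZZ_5[x]/(x^2-5)$ is, by the Eisenstein criterion, the valuation ring of the totally ramified quadratic extension $\QQ_5(\sqrt 5)/\QQ_5$; it is a DVR with uniformizer $\pi:=\sqrt 5$ and residue field $\FF_5$, and crucially $5=\pi^2$ inside $R$. Because $M$ is annihilated by $5^n$, the $\ZZ[\sqrt 5]$-action on it factors through $R$, so $M$ is a finite $R$-module and the structure theorem yields a decomposition
\[ M\;\cong\;\bigoplus_{i=1}^{r} R/(\pi^{e_i}),\qquad e_1\geq\cdots\geq e_r\geq 1. \]
Since $\ZZ[\sqrt 5]/(\sqrt 5^{2n})=\ZZ[\sqrt 5]/(5^n)\cong R/(5^n)=R/(\pi^{2n})$, it suffices to prove that $r=1$ and $e_1=2n$.

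The final step pins down the exponents using two numerical invariants. Counting orders, $|M|=5^{2n}$ forces $\sum_i e_i=2n$. Counting $5$-torsion, the abelian isomorphism $M\cong(\ZZ/5^n\ZZ)^2$ gives $\dim_{\FF_5}M[5]=2$; on the other hand, from the decomposition together with $5=\pi^2$ one computes $\dim_{\FF_5}M[5]=\sum_i\min(e_i,2)$, since each summand $R/(\pi^{e})$ contributes $\min(e,2)$. The equation $\sum_i\min(e_i,2)=2$ leaves only two possibilities: either $r=1$, in which case $e_1=\sum_i e_i=2n$; or $r=2$ with $e_1=e_2=1$, in which case $\sum_i e_i=2$, i.e. $n=1$, which is excluded by the hypothesis $n>1$. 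Hence $r=1$ and $e_1=2n$, so that $M\cong R/(\pi^{2n})\cong\ZZ[\sqrt 5]/(\sqrt 5^{2n})$ as $\ZZ[\sqrt 5]$-modules, as claimed.

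The main obstacle here is conceptual rather than computational: one must recognise that the abelian-group structure already furnished by \cref{abelian-group-structure}, combined with the ramification of $5$ (so that $R$ is a DVR with uniformizer $\sqrt 5$), is by itself enough to force $M$ to be cyclic. The tempting alternative — to prove directly that $[\sqrt 5]$ has a one-dimensional kernel on $\ja[5](\FF)$ — would require genuinely geometric input about the kernel of the degree-$25$ isogeny $[\sqrt 5]$, whereas the elementary-divisor bookkeeping above sidesteps this entirely. The role of the hypothesis $n>1$ is precisely to eliminate the single degenerate configuration $M\cong(\ZZ/5\ZZ)^2$, in which $[\sqrt 5]$ would annihilate all of $\ja[5]$; that case is genuinely not decided by the abelian-group structure alone, which is why it must be assumed away.
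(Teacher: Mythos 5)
Your proof is correct and takes essentially the same approach as the paper: both decompose $\ja(\FF_{\lambda_{m,n}})[5^\infty]$ into cyclic modules over a local principal ideal ring in which $\sqrt 5$ generates the maximal ideal (the paper uses the artinian quotient $\ZZ[\sqrt 5]/(\sqrt 5^{2n})$, you use the $5$-adic DVR $\ZZ_5[\sqrt 5]$ — interchangeable here), and both pin down the elementary divisors by combining the total order $5^{2n}$ with a count of $5$-torsion supplied by \cref{abelian-group-structure}. The only difference is cosmetic bookkeeping at the end: the paper first deduces $e_1 \geq 2n-1$ from the fact that $[5^{n-1}]=[\sqrt 5]^{2n-2}$ is nonzero on this group and then counts the $24$ elements of order $5$ to force $r = 1$, whereas you reach the same conclusion directly from $\sum_i \min(e_i,2) = 2$, with the hypothesis $n > 1$ excluding the degenerate $(\ZZ/5\ZZ)^2$ configuration in both versions.
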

\begin{proof}
For the proof, we will write $\mathcal H$, $\ja$ and $\FF$ for $\mathcal{H}_{\lambda_{m,n}}$, $\mathcal{J}_{\lambda_{m,n}}$ and $\FF_{\lambda_{m,n}}$ respectively. 

The first claimed equality follows directly from \cref{abelian-group-structure}. Moreover, that same proposition tells us that 
\[ \ja(\FF)[5^\infty] = (\ZZ/5^n\ZZ)^2 \]
as abelian groups. In particular, the endomorphism $[5^n] = [\sqrt 5]^{2n}$ acts as the zero map on $\ja(\FF)[5^\infty]$, so we may view $\ja(\FF)$ as a module over $\ZZ[\sqrt 5]/(\sqrt 5^{2n})$. This ring is an artinian principal ideal ring (its ideals are of the form $(\sqrt 5^k)$ for $k = 0,\ldots, 2n$), and the structure theorem for modules over such rings shows that any finitely generated module over such a ring is is a product of cyclic modules. Hence, we may write 
\[ \ja(\FF)[5^\infty] = \prod_{i = 1}^r \frac{\ZZ[\sqrt 5]}{(\sqrt 5^{e_i})} \]
for certain integers $e_1 \geq \cdots \geq e_r \geq 1$. To get the number of elements correct, we need that $e_1 + \ldots + e_r = 2n$. Since $[5^{n-1}] = [\sqrt 5]^{2n-2}$ does not act as the zero map on $\ja(\FF)[5^\infty]$, we need that $e_1 \geq 2n-1$. Since $n > 1$, we have $e_1 \geq 3$, and we see that the factor $\ZZ[\sqrt 5]/(\sqrt 5^{e_1})$ contains exactly 24 elements of order 5. But from the structure of $\ja(\FF)[5^\infty]$ as abelian group, we know that in total it contains 24 elements of order 5. Hence we have $r = 1$, and so $e_1 = 2n$, and the result follows. 
\end{proof}

We now arrive at the main theorem of this section. 

\begin{theorem}\label{main_theorem}
Let $n,m \in \ZZ$ with $n$ odd and $5 \nmid m$ s.t $m^2<\tfrac{(\sqrt{5}^n - 1)^4 + 1}{4\cdot 5^n}$.
Set as before $\lambda_{m,n} = 4m^2 5^n - 1$, and assume that $\gcd(\lambda_{m,n}, h) = 1$. Let $Q \in \mathcal J(\ZZ/\lambda_{m,n} \ZZ)$ be any point, and define $P = 4m^2 \cdot Q$. 
Let 
\[ r = \inf \big\{k \,: \,[\sqrt 5]^k P = 0 \textrm{ in } \ja(\ZZ/\lambda_{m,n}\ZZ)\big\} \,\in \mathbb{N} \cup \{\infty\}. \]
\begin{enumerate}
    \item If $r > 2n$, then $\lambda_{m,n}$ is composite.
    \item If $4\cdot\log_5(\sqrt[4]{\lambda_{m,n}} + 1) < r \leq 2n$, then either
     \begin{itemize}
        \item[\arabic{enumi}a)]  $\lambda_{m,n}$ is prime, or
        \item[\arabic{enumi}b)]  there is a prime $p | \lambda_{m,n}$ such that $[\sqrt5]^{r-1} P = 0$ mod $p$.  
    \end{itemize}
    \item If $r \leq 4\cdot\log_5(\sqrt[4]{\lambda_{m,n}} + 1)$, then either
    \begin{itemize}
        \item[\arabic{enumi}a)] $\lambda_{m,n}$ is composite, or
        \item[\arabic{enumi}b)] $\lambda_{m,n}$ is prime and there exists a point $Q' \in \ja(\ZZ/\lambda_{m,n}\ZZ)$ with $Q = [\sqrt 5]^{2n-r}(Q')$. 
    \end{itemize} 
\end{enumerate}
\end{theorem}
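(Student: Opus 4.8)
The plan is to handle the three cases through a single dichotomy: when $\lambda_{m,n}$ is prime I would exploit \cref{abelian-group-structure} and \cref{sqrt5-module}, and when it is composite I would pass to the smallest prime divisor and invoke Hasse--Weil. Two preliminary observations organize everything. First, $[\sqrt 5]^kP=0$ forces $[\sqrt 5]^{k+1}P=0$, so the vanishing set $\{k:[\sqrt 5]^kP=0\}$ is upward closed and $r$ is a genuine threshold: $[\sqrt 5]^kP=0$ iff $k\geq r$. Second, the hypothesis $m^2<\tfrac{(\sqrt 5^n-1)^4+1}{4\cdot5^n}$ is equivalent to $\lambda_{m,n}<(\sqrt 5^n-1)^4$, hence to $(\sqrt[4]{\lambda_{m,n}}+1)^4<5^{2n}$, i.e. $4\log_5(\sqrt[4]{\lambda_{m,n}}+1)<2n$. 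This shows the three ranges of $r$ are exhaustive, and incidentally forces $n\geq 3$, so that \cref{sqrt5-module} applies throughout.

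The prime-case computation I would record once and reuse: if $\lambda_{m,n}$ is prime then by \cref{abelian-group-structure} and \cref{sqrt5-module} we have $P=4m^2Q\in 4m^2\ja(\FF_{\lambda_{m,n}})=\ja(\FF_{\lambda_{m,n}})[5^\infty]\cong \ZZ[\sqrt 5]/(\sqrt 5^{2n})$, a cyclic $\ZZ[\sqrt 5]$-module annihilated by $[\sqrt 5]^{2n}$. Hence $r\leq 2n$ whenever $\lambda_{m,n}$ is prime. Since $r>2n$ is equivalent to $[\sqrt 5]^{2n}P\neq 0$, the contrapositive is exactly case 1.

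The heart of the argument is case 2, and I expect the lower bound on $\#\ja(\FF_p)$ to be the main obstacle. Suppose $\lambda_{m,n}$ is composite with smallest prime factor $p$; as $\lambda_{m,n}$ is odd, prime to $5$, and prime to $h$, the curve has good reduction at $p$ and $[\sqrt 5]$ is defined over $\FF_p$. Reducing $[\sqrt 5]^rP=0$ modulo $p$, I would assume for contradiction that $[\sqrt 5]^{r-1}P\neq 0$ in $\ja(\FF_p)$ and consider the cyclic submodule $M=\ZZ[\sqrt 5]\cdot P\subseteq \ja(\FF_p)$. The key point is that the chain $M\supseteq[\sqrt 5]M\supseteq\cdots\supseteq[\sqrt 5]^{r-1}M\supseteq[\sqrt 5]^rM=0$ is strictly decreasing: each $[\sqrt 5]^jM$ with $j\leq r-1$ contains $[\sqrt 5]^{r-1}P\neq 0$, and if $[\sqrt 5]^jM=[\sqrt 5]^{j+1}M$ then iterating gives $[\sqrt 5]^jM=[\sqrt 5]^rM=0$, a contradiction. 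Each successive quotient is a nonzero module over $\ZZ[\sqrt 5]/(\sqrt 5)\cong\FF_5$, hence has at least $5$ elements, so $\#\ja(\FF_p)\geq |M|\geq 5^r$. Combined with $\#\ja(\FF_p)\leq(\sqrt p+1)^4\leq(\sqrt[4]{\lambda_{m,n}}+1)^4$ (using $p\leq\sqrt{\lambda_{m,n}}$), this yields $r\leq 4\log_5(\sqrt[4]{\lambda_{m,n}}+1)$, contradicting the hypothesis of case 2. Thus if $\lambda_{m,n}$ is composite then $[\sqrt 5]^{r-1}P=0$ mod $p$, which is alternative 2b.

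Finally, for case 3 I would assume $\lambda_{m,n}$ prime and work inside $\ja(\FF_{\lambda_{m,n}})[5^\infty]\cong\ZZ[\sqrt 5]/(\sqrt 5^{2n})$, which is a finite chain ring whose ideals are exactly the $(\sqrt 5^k)$. In such a module the threshold $r$ means $P$ has $\sqrt 5$-valuation exactly $2n-r$ (and $2n-r>0$ by the exhaustiveness inequality), so $P=[\sqrt 5]^{2n-r}P'$ for some $P'\in\ja(\FF_{\lambda_{m,n}})$. To descend this to $Q$ itself I would choose $a,b\in\ZZ$ with $4m^2a+5^nb=1$ (possible since $5\nmid m$) and set $Q'=aP'+b[\sqrt 5]^rQ$; then using $P=4m^2Q$ and $5^n=\sqrt 5^{2n}$,
\[ [\sqrt 5]^{2n-r}Q'=a[\sqrt 5]^{2n-r}P'+b[\sqrt 5]^{2n}Q=aP+b\,5^nQ=(4m^2a+5^nb)Q=Q, \]
which is alternative 3b. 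Beyond the filtration estimate the only delicate bookkeeping is the boundary inequalities ensuring the three $r$-ranges partition $\mathbb N\cup\{\infty\}$ and that $2n-r\geq 0$ in case 3, both of which follow from the equivalent form of the bound on $m$ noted at the start.
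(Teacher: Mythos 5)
Your proof is correct and follows essentially the same route as the paper: case 1 via \cref{sqrt5-module}, case 2 by playing the Hasse--Weil bound off against the size of the $\ZZ[\sqrt 5]$-submodule of $\ja(\FF_p)$ generated by $P$, and case 3 via the identical B\'ezout descent $Q' = aP' + [\sqrt 5]^r(bQ)$. The only (harmless) differences are that you justify $\#\ja(\FF_p)\geq 5^r$ by a strictly decreasing $[\sqrt 5]$-filtration rather than the paper's identification of that submodule with $\ZZ[\sqrt 5]/(\sqrt 5^{\,r})$, and that you add explicit checks (exhaustiveness of the three ranges, $n\geq 3$ so that \cref{sqrt5-module} applies, $2n-r>0$) which the paper leaves implicit.
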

\begin{proof}
\begin{enumerate}
    \item If $\lambda_{m,n}$ is prime, then by \cref{sqrt5-module}, we have $Q = 4m^2 P \in \ja(\FF_{\lambda_{m,n}})[5^\infty]$. By that same proposition, this group is annihilated by $[\sqrt 5]^{2n}$. Hence, if $\lambda_{m,n}$ is prime then $[\sqrt 5]^{2n} P = 0$ and so $r \leq 2n$. 
    \item Let $p$ be the smallest prime dividing $\lambda_{m,n}$, and assume that $[\sqrt 5]^{r-1} P \neq 0$ in $\ja(\FF_p)$. We show that $\lambda_{m,n} = p$. Indeed, the group $\ja(\FF_p)$ is a $\ZZ[\sqrt 5]$-module via the action of $[\sqrt 5]$, and the assumption that $[\sqrt 5]^{r-1} P \neq 0$ while $[\sqrt 5]^{r} P = 0$ in $\ja(\FF_p)$ implies that $P$ generates a $\ZZ[\sqrt 5]$-submodule of $\ja(\FF_p)$ isomorphic to $\ZZ[\sqrt 5]/(\sqrt 5^{r})$. In particular we have that 
    \[\#\ja(\FF_p) \geq 5^{r} > (\sqrt[4]{\lambda_{m,n}} + 1)^4.\] 
    On the other hand the Hasse-Weil inequality gives
    \[ \#\ja(\F_{p}) \leq (\sqrt{p} + 1)^4. \] 
    Comparing these inequalities, we find that $p > \sqrt{\lambda_{m,n}}$. This is possible only if $p = \lambda_{m,n}$, and in particular $\lambda_{m,n}$ is prime. 
    \item Suppose $\lambda_{m,n}$ is prime. By \cref{sqrt5-module}, $\ja(\FF_{\lambda_{m,n}}) = \ZZ[\sqrt 5]/(\sqrt 5^{2n})$ as $\ZZ[\sqrt 5]$-modules. Since $[\sqrt 5]^{r} P = 0$ and $[\sqrt 5]^{r-1} P \neq 0$, there is a point $P' \in \ja(\FF_{\lambda_{m,n}})$ with $P = [\sqrt 5]^{2n-r}P'$. Choose integers $a$ and $b$ such that $4m^2a + 5^nb = 1$ (such integers exist because $4m^2$ and $5^n$ are coprime). Then define $Q' = aP' + [\sqrt 5]^{r}(bQ) \in \ja(\FF_{\lambda_{m,n}})$. Then we have
        \[    [\sqrt 5]^{2n-r}Q' = [\sqrt 5]^{2n-r}aP' + 5^n b Q = aP + 5^nbQ = (4m^2a + 5^nb)Q = Q\]
    as needed. \qedhere
    \end{enumerate}
\end{proof}

\section{Implementation}\label{section_implementation}

In this section, we describe how to implement \cref{main_theorem} as an algorithm to test primality of numbers of the form $\lambda_{m,n} = 4m^2 \cdot 5^n - 1$. The algorithm depends on the auxiliary data of the hyperelliptic curve $\mathcal{H}\colon y^2 = x^5 + h$ with $h \in \mathbb{Z}$ and a base point $Q_0 = (\alpha, \beta) \in \mathcal{H}(\QQ)$ whose image in $\ja$ has infinite order (e.g.,  $h= -\alpha^5 + \beta^2$ with $\alpha,\beta\in\mathbb{Z}$, $h\nmid\lambda_{m,n}$ and $[(\alpha,\beta)-\infty]\in\ja(\QQ)$ of infinite order).

This algorithm consists of three parts. First one has to compute an explicit expression for the $[\sqrt{5}]$ morphism of the Jacobian $\mathcal{J}$ of $\mathcal{H}$. Secondly, one has to compute the expression $P_0 = 4m^2\cdot Q_0 \in \mathcal{J}(\mathbb{Q})$. Finally, one has to apply $[\sqrt 5]$ iteratively on $P_0$ and compare the result to the cases in \cref{main_theorem}. Note that the first step does not depend on $m$ and $n$, and the second step does not depend on $n$. Hence, for a fixed choice of $m$ and a fixed choice of the auxiliary data, one only has to perform steps 1 and 2 once, and the output of these steps can then be used to test primality of $\lambda_{m,n}$ for any value of $n$. This is important, because the first two steps are reasonably time and resource intensive, and require computations in the Jacobian of $\mathcal{H}$ which requires specialized mathematical software like MAGMA. The third step, on the other hand, consists of applying explicit polynomials repeatedly to an explicit vector of numbers, and therefore can be done in general purpose programming languages like Python. 

As mentioned in the introduction, the algorithm has an indeterminate case, corresponding to case 3 in \cref{main_theorem}. If this happens, one has to change the auxiliary data and run the test again. However, in \cref{probability-section} we show that the probability of this happening is vanishingly small for even moderately sized $m$ and $n$, at least under the assumption that $\lambda$ is prime. 

\subsection{Using the Kummer surface \texorpdfstring{$\mathcal{K}$}{K} of \texorpdfstring{$\ja$}{J} instead of \texorpdfstring{$\ja$}{J}}
In order to do explicit computations with elements of the Jacobian, one needs a way of representing the elements of $\mathcal J$. One way of doing this would be to embed $\mathcal{J}$ in projective space. The Jacobian of the curve $\mathcal{H}$ embeds into $\mathbb{P}^8$, see \cite{GRANTFORMAL} for explicit formulas (if $\mathcal{H}$ did not have a rational Weierstrass point, one would even need $\mathbb{P}^{15}$, see \cite{GLJCG2}). Unfortunately, this large number of coordinates turns out to be impractical computationally. Another option is to use the fact that elements of $\mathcal{J}(\overline \QQ)$ are represented by divisors of the form $P_1 + P_2 - 2\cdot\infty$ for some $P_1, P_2 \in \mathcal{H}(\overline\QQ)$, for example using \emph{Mumford coordinates}, see \cite{GLOJ}. However, when using these coordinates one often has to distinguish between divisors based on whether 0, 1 or 2 of the points $P_1$ and $P_2$ are equal to $\infty$. In our case, this leads to complicated formulas involving these different cases. 

To avoid these difficulties, we work not with points on the Jacobian $\mathcal{J}$, but with points on the associated \emph{Kummer surface} $\mathcal{K} = \mathcal{J}/\langle[\pm 1]\rangle$ given by modding out the involution on $\mathcal{J}$. This object is not an algebraic group anymore, because addition is not well-defined. However, each endomorphism of $\mathcal{J}$ as abelian variety descends to an endomorphism of $\mathcal{K}$, because every endomorphism of $\mathcal{J}$ commutes with the $[-1]$ map (e.g point doubling in $\mathcal{K}$ \cite[Chapter 3, Theorem 3.4.1, example 3.6.2]{PROLEGOMENA}). In particular, there is still a $[\sqrt 5]$ map $\mathcal{K} \to \mathcal{K}$. Moreover, $\mathcal{K}$ embeds as a quartic surface in $\mathbb{P}^3$ (see \cite[Chapter 3, Equation 3.1.8]{PROLEGOMENA}), so we can represent points on $\mathcal{K}$ with four coordinates. A nice additional benefit of using the Kummer surface is that the $[\sqrt 5]$ endomorphism on $\mathcal{K}$ is defined already over $\mathbb{Q}$, rather than over $\mathbb{Q}(\sqrt 5)$, so the formulas we find involve only rational numbers. See \cite[Chapter 3]{PROLEGOMENA} and \cite[Section 5]{reza} for more background on the Kummer surface and its embedding into $\mathbb{P}^3$. For the rest of the section, we fix the quotient map $\kappa\colon \mathcal{J} \to \mathcal{K}$ and the embedding $\iota\colon \KK \to \PP^3$ as defined in \cite[Section 2]{FLYNNJAC}; these maps are implemented in MAGMA.

\subsection{Computation of \texorpdfstring{$[\sqrt 5]$}{[sqrt5]}}\label{subsection_sqrt5}

Because we consider $\KK$ as embedded in $\PP^3$, the morphism $[\sqrt 5]\colon \KK \to \KK$ can be written in the form
\begin{align}\label{kumendo}
\begin{split}
\widehat\varphi:\mathcal{K}\subset\mathbb{P}^3&\to\mathcal{K}\subset\mathbb{P}^3 \\
P:=[x_0:x_1:x_2:x_3]&\mapsto [\widehat\varphi_0(P):\widehat\varphi_1(P):\widehat\varphi_2(P):\widehat\varphi_3(P)]
\end{split}
\end{align}
where the $\hat\varphi_i$ are homogeneous polynomials of some degree $N \geq 1$ (it seems that in our case, we can always take $N = 5$). Note that these polynomials are not uniquely determined, for two reasons: one can multiply the four polynomials by a constant, and one can add to each $\hat \varphi_i$ an arbitrary homogeneous polynomial of degree $N$ that vanishes identically on $\KK$. 

To determine explicit polynomials $\hat\varphi_i$, we use an interpolation strategy. That is, we first generate a large number of pairs of points $(P,Q)$ with $P, Q \in \KK$ such that $Q = [\sqrt 5]P$, and then we solve a linear system of equations to obtain the coefficients of the $\hat \varphi_i$. Roughly, the steps are the following. 
\begin{enumerate}
    \item Generate a sufficiently large set $S\subset\ja(\mathbb{Q}(\zeta))$ using $Q_0$ and the action of $\zeta$ on $\ja$. For example $S=\{[a + b\zeta + c\zeta^2 + d\zeta^3] Q_0\}$ for $a,b,c,d \in \{-B, \ldots, B\}$ for some sufficiently large integer $B$. If $P$ and $-P$ are both in $S$, drop one of them. 
    \item Calculate the pairs $(P, [\sqrt 5]P) \in \mathcal{J}(\mathbb{Q}(\zeta))^2$ for each $P \in S$. 
    \item Calculate the set $T = \{(\kappa(P),\kappa([\sqrt 5](P))) : P \in S\}$, as a subset of $(\mathbb{P}^3)^2$.  
    \item Construct a projective system of linear equations $\mathcal{L}$ using the set $T$. Use this system to deduce the coefficients of four homogeneous polynomials of degree $N$ that express $\hat\varphi:\mathcal{K}\to\mathcal{K}$ in $\mathbb{P}^3$. (If $N$ is unknown, simply choose large enough $N$.) 
    \item Remove any common factors in the $\hat \varphi_i$ (in case $N$ was larger than needed).
    \item Check the validity of the $\hat\varphi_i$ with a generic point computation, which uses the quartic equations defining $\mathcal{K}$. 
\end{enumerate} 
We now explain these steps in more detail. 

For step 1, we use the implementation of Jacobians in MAGMA \cite{Magma}. In this computer algebra system, points on the Jacobian are represented in Mumford coordinates. The idea of this representation is to encode the divisor class $[(x_1,y_1)+(x_2,y_2)-2\infty]$ as a pair of polynomials $\langle u(X),v(X)\rangle$ such that $u(x_i)=0$, $v(x_i)=y_i$, $\deg u\leq 2$ and $\deg v\leq 1$. For generic choices of $(x_i, y_i)$, these are given explicitly by
\begin{align}\label{MumfordGeneric}
\begin{split}
\langle u(X),v(X)\rangle&:=\langle X^2 -AX+B,CX+D\rangle\\ 
&=\langle X^2 -(x_1+x_2)X+x_1x_2,\tfrac{y_1-y_2}{x_1-x_2}X+\tfrac{x_2y_1-x_1y_2}{x_1-x_2}\rangle. 
\end{split}
\end{align}
The other \textit{non-generic} cases $[(x_1,y_1)-\infty]$, $[2(x_1,y_1) - 2\infty]$ and $[0]$ are represented respectively by $\langle X-x_1,y_1\rangle$, $\langle X^2-(x_1+x_2)X+x_1x_2,\tfrac{f'(x_1)}{2y_1}X-\tfrac{f'(x_1)}{2y_2}x_1+y_1\rangle$ and $\langle 1,0\rangle$ where $Y^2=f(X)$ is the equation of the hyperelliptic curve $\HC$ associated to $\ja$. The addition of points on $\ja$ 
is already implemented in MAGMA, so we only need to implement the action of $[\zeta^m]$ on $\ja$. It is easy to see from the application of the action on each point in the support of any divisor in $\ja$, that for Mumford representation $[\zeta^m]$ acts as:
\begin{align}\label{zetaaction}
\begin{split}
[\zeta^m]\langle X^2 -AX+B,CX+D\rangle &=\langle X^2 -\zeta^m AX+\zeta^{2m}B,\zeta^{-m}CX+D\rangle.
\end{split}
\end{align}
Using this, one can compute the action of $[\alpha]$ on $\ja$ for any $\alpha \in \mathbb{Z}[\zeta]$. For example, since $\sqrt 5 = 1 + 2\zeta^2 + 2\zeta^3$, we have that
\begin{equation}\label{sq5}
[\sqrt{5}]\langle u(X),v(X)\rangle=\langle u(X),v(X)\rangle + 2\zeta^2\langle u(X),v(X)\rangle + 2\zeta^3 \langle u(X),v(X)\rangle.
\end{equation}

Having implemented the action of $\mathbb{Z}[\zeta]$ on $\mathcal J$, it is easy to compute the set $S$ in step 1. We start with the point $Q_0 = (\alpha, \beta) \in \mathcal H$, which we identify as usual with the point $[(\alpha, \beta) - \infty]$ on the Jacobian, and we compute the points $[a + b\zeta + c\zeta^2 + d\zeta^3] Q_0 \in \mathcal{J}(\mathbb{Q}(\zeta))$ for $a, b, c, d \in \{-B, \ldots, B\}$ for some integer $B$ (in our implementation, $B = 4$ has always been sufficient). Since we are interested in points on $\KK$, for each pair $P$ and $-P$ in $S$ we remove one of them. Of course, one could speed this up by avoiding these double computations from the start, by only computing $[a + b\zeta + c\zeta^2 + d\zeta^3] Q_0$ for tuples $(a,b,c,d)$ with the first non-zero coordinate positive. 

For step 2, we use the action of $[\sqrt 5]$ as described in Equation \ref{zetaaction} to compute $[\sqrt 5]P$ for each $P \in S$. 

In step 3, we use MAGMA's implementation of the map $\kappa\colon \mathcal{J} \to \KK \subset \mathbb{P}^3$ to compute the pairs $(\kappa(P), \kappa([\sqrt 5]P))$ for each $P \in S$. Explicitly, this gives us a large collection $T$ of pairs $(\mathbf{v}, \mathbf{w})$, where $\mathbf{v}$ and $\mathbf{w}$ are vectors of length four and coefficients in $\mathbb{Q}(\zeta)$. Each $\mathbf{v}$ gives projective coordinates of a point of $\KK$, and the corresponding $\mathbf{w}$ gives projective coordinates of its image under $[\sqrt 5]$. 

For step 4 we use this set of pairs $T$ to construct a system of linear equations that the coefficients of the $\hat\varphi_i$ satisfy. For this, we must first know or guess the degree $N$ of the polynomials $\hat \varphi_i$. In our case, it seems that $N = 5$ always works. 
Consider the set $\mathfrak{m}$ of the monomials of degree $N$ in four variables $x_0,x_1,x_2,x_3$. The polynomials $\hat\varphi_i$ we want to find, take the form
\[ \hat\varphi_i = \sum_{\mu \in \mathfrak{m}} a_{i,\mu} \cdot \mu\]
for some unknown coefficients $a_{i,\mu}$. For each pair $(\mathbf{v}, \mathbf{w}) \in T$, writing $\mathbf{v} = (v_0, v_1,v_2,v_3)$ and $\mathbf{w} = (w_0,w_1,w_2,w_3)$, we have the relation
\[ [\widehat\varphi_0(\mathbf{v}):\widehat\varphi_1(\mathbf{v}):\widehat\varphi_2(\mathbf{v}):\widehat\varphi_3(\mathbf{v})] = [w_0: w_1: w_2: w_3]\]
as points of $\mathbb{P}^3$. Thus, for each pair $(\mathbf{v}, \mathbf{w})$ there is a non-zero constant $\lambda_{\mathbf{v}}$ such that 
\begin{equation}\label{abstract_equations}
    \sum_{\mu \in \mathfrak{m}} a_{i, \mu} \cdot \mu(\mathbf{v}) = \lambda_{\mathbf{v}} w_i 
\end{equation} 
holds for $i = 0,1,2,3$. This defines a linear system of equations in $4\cdot \#\mathfrak{m} + \#T$ unknowns (namely the $a_{i,\mu}$ and the $\lambda_{\mathbf{v}}$) with a total of $4 \cdot \#T$ linear relations between them. 

 To solve this system in MAGMA, we construct the matrix $M$ given by $M_{j,k}:={\mu}_k(\mathbf{v_j})$ where $\mathfrak{m} = \{\mu_k\}_{k = 1,2,\ldots, \#\mathfrak{m}}$ and $T = \{(\mathbf{v}_j, \mathbf{w}_j)\}_{j = 1,2, \ldots, \#T}$. Furthermore, build the diagonal matrices $\Delta^i$ using the $i$-th coordinate of each image point $\mathbf{w}_j = ((w_j)_0, (w_j)_1,(w_j)_2,(w_j)_3)$, that is, $\Delta^i_{j,j}=(w_j)_i$ and $\Delta^i_{j,j'}=0$ if $j\neq j'$.  With this we obtain the system:
 \[
 \mathcal{L}:=
   \begin{bmatrix}
M & \textbf{0} & \textbf{0} & \textbf{0} &-\Delta^0 \\
\textbf{0} & M & \textbf{0} & \textbf{0} &-\Delta^1 \\
\textbf{0} & \textbf{0} & M & \textbf{0} &-\Delta^2 \\
\textbf{0} & \textbf{0} & \textbf{0} & M &-\Delta^3 \\
  \end{bmatrix}.
\]
The kernel of $\mathcal{L}$ gives the coefficients $a_{i,\mu}$ and the constants $\lambda_{\mathbf{v}}$ satisfying Equation \ref{abstract_equations}. But note that not every solution is useful, as there will be solutions where one or more of the $\hat \varphi_i$ are identically zero on $\mathcal{K}$. For example, there is always the trivial solution where all variables are zero, but this solution is never useful. One should choose an element of the kernel for which each $\hat \varphi_i$ is non-zero on $\mathcal{K}$. This can be read off from the $\lambda_{\mathbf{v}}$: these must be non-zero. Note that $[\sqrt 5]$ and $[-\sqrt 5]$ induce the same map $\KK \to \KK$, so a Galois argument shows that $[\sqrt 5]\colon \KK \to \KK$ is defined over $\mathbb{Q}$. Hence, one should get a solution whose coefficients are in $\mathbb{Q}$. After scaling, these coefficients can be taken to be coprime integers. 

For step 5, one should check that the $\hat\varphi_i$ are coprime. If not, a common factor can be divided out. This happens only if $N$ is chosen too large.

Step 6 is a check to ensure that the polynomials $\hat\varphi_i$ indeed represent $[\sqrt 5]$. This check is necessary, because it is theoretically possible that the set of points $S$ used for the interpolation is not `generic' enough: if all points of $S$ happen to map into a curve on $\mathcal{K}$ of low degree, then the equations $\hat\varphi_i$ are only guaranteed to be correct on this curve and not on all of $\mathcal{K}$. Therefore, we check that the polynomials $\hat\varphi_i$ act correctly on a generic point on $\mathcal{K}$. To do this, we then consider the hyperelliptic curve $\mathcal{H}_F$ over $F:=\mathbb{Q}(\ja)$ given by the equation $y^2 = x^5 + h$, its Jacobian $\mathcal{J}_F$ and its associated Kummer surface $\KK_F$. By construction there is a generic point $P = [(x_1,y_1) + (x_2, y_2) - 2\infty] \in \mathcal{J}_F(F)$. Using Equation \ref{sq5}, we can then compute $Q = [\sqrt 5]P \in \mathcal{J}_F(F)$. With this we can check that
\[ [ \hat\varphi_0(\kappa(P)) :  \hat\varphi_1(\kappa(P)) :  \hat\varphi_2(\kappa(P)) :  \hat\varphi_3(\kappa(P))] = \kappa(Q) \]
as points in $\mathbb{P}^3(F)$. If this is the case, then the polynomials $\hat\varphi_i$ correctly represent the action of $[\sqrt 5]$. If not, one has to start with a larger set $S$ in step 1. \\
One can construct the function field of $F$ of $\ja$ in MAGMA via the description
\[F = \operatorname{Frac}\Big( \mathbb{Q}(\zeta)[A,B,C,D]/(\Psi_1,\Psi_2) \Big), \]
where $\Psi_1$ and $\Psi_2$ are the polynomials in $A,B,C,D$ satisfying the congruence
\begin{equation*}
X^5 +h  - (CX+D)^2\equiv \Psi_1 X + \Psi_2\bmod X^2+AX +B.
\end{equation*}
This congruence is used since it is easy to check that for all divisors of $\ja$ in Mumford coordinates $\langle X^2+AX+B,CX+D\rangle$ as in Equation \ref{MumfordGeneric} for any genus $2$ $\HC$ given by $Y^2=f(X)$, one has that $X^2+AX+B\mid f(X)-(CX+D)^2$.  \\ 
 
We implemented this procedure in MAGMA. Our implementation can be found on GitHub, see \cite{githubmagma}. As an example, we obtained the following polynomials representing the map $[\sqrt{5}]\colon \mathcal{K}\to\mathcal{K}$ for the Kummer surface $\mathcal{K}$ associated to the hyperelliptic curve $\HC$ given by $y^2=x^5 + 2$.

\begin{align}\label{equations_example_sqrt5}
\begin{split}
    \hat{\varphi}_0 :=\, & 320x_0^3x_1^2 + 80x_0^2x_1x_3^2 + 40x_0^2x_2^2x_3 + 80x_0x_1^2x_2x_3 -
        120x_0x_1x_2^3 \\
        & + 5x_0x_3^4 + 40x_1^3x_2^2 + 10x_1x_2x_3^3 +
        10x_2^3x_3^2\\
\hat{\varphi}_1:=\, & 640x_0^3x_1x_2 - 320x_0^2x_1^3 - 160x_0^2x_2x_3^2 + 120x_0x_1^2x_3^2 -
        40x_0x_1x_2^2x_3\\
        & + 120x_0x_2^4 - 80x_1^3x_2x_3 + 40x_1^2x_2^3 -
        5x_1x_3^4 + 10x_2^2x_3^3\\
\hat{\varphi}_2:=\, & 320x_0^3x_2^2 - 320x_0^2x_1^2x_2 - 40x_0^2x_3^3 - 320x_0x_1^4 -
        200x_0x_1x_2x_3^2 \\ 
        & +40x_0x_2^3x_3 + 80x_1^3x_3^2 - 120x_1^2x_2^2x_3 +
        5x_2x_3^4\\
\hat{\varphi}_3:=\, & 512x_0^5 + 320x_0^2x_1^2x_3 + 320x_0^2x_1x_2^2 - 40x_0x_1x_3^3 -
        360x_0x_2^2x_3^2\\
        &+ 64x_1^5 - 40x_1x_2^3x_3 + 24x_2^5 + x_3^5.
\end{split}
\end{align}
We have uploaded formulas for $[\sqrt 5]$ for various values of $h$ to GitHub, see \cite{githubkumendos}.

\subsection{Computation of \texorpdfstring{$P_0 = 4m^2Q_0$}{P0 = 4m2Q0}} \label{subsection_starting_vector}
The other ingredient that is needed to make \cref{main_theorem} into an algorithm is the point $P_0 = 4m^2 Q_0$, or rather, its image in $\KK$. This image will be the starting vector for the iterative application of $[\sqrt 5]$. Again, we use MAGMA to obtain this. Note that instead of computing $\kappa(4m^2 \cdot Q_0)$ directly, it is more efficient to compute $4m^2 \kappa(Q_0)$, i.e. first take the image of $Q_0$ in $\KK$ and then multiply by $4m^2$. The result is a vector of 4 projective coordinates with coprime coefficients in $\mathbb{Z}$. For example in MAGMA:
\begin{verbatim}
> alpha := -1; beta := 1; m := 1; h := beta^2 - alpha^5;
> J := Jacobian(HyperellipticCurve([1,0,0,0,0,h]));
> K := KummerSurface(J);
> K;
Kummer surface of Jacobian of Hyperelliptic Curve defined 
by y^2 = x^5 + 2 over Rational Field
> Q0 := elt<J| Polynomial([-alpha,1]),Polynomial([beta]),1>;
> Q0;
(x + 1, 1, 1)
> 4*m^2*K!Q0;                                       
(2624400 : -3559904 : 1744784 : 4190401)
\end{verbatim}


We note that the size of this vector $\kappa(P_0)$ can be estimated using the theory of heights. Namely, consider the (logarithmic) canonical height $\hat h(Q_0)$ of $Q_0$. Then the canonical height of $P_0$ is $\hat h(P_0) = 16m^4 \cdot \hat h(P_0)$ because of the quadratic behaviour of the canonical height. Thus, one expects that also $\kappa(P_0)$ has logarithmic height approximately $16m^4 \cdot \hat h(Q_0)$, so the largest of the four coordinates of $\kappa(P_0)$ (after scaling so that the coordinates are coprime integers) should have absolute value around $\exp(16m^4 \cdot \hat h(Q_0))$, so just about $16m^4 \cdot \hat h(Q_0) / \log(10)$ decimal digits. 

For $Q_0 = (-1,1)$, so $h = 2$, we have $\hat h(Q_0) \approx 1.0279805$. For $m = 1$ we expect coordinates of size $10^{7.1}$, and we find
\[\kappa(4P) = [ 2624400 : -3559904 : 1744784: 4190401 ] \]
with indeed $8$ digits as expected. For $m = 2$, we expect coordinates of size $10^{114.3}$, and we find
\begin{align*}
\kappa(16P) = [ & 4046394669688530407248378946538416871445705653541548795862\\
                & 35105531112025243056923459621450802999130059192965945600  \quad :\\
                & 1517458072687990649583893248327329169920989863562377996111\\
                & 86211315039106660124123347467785740933508167817531798016 \quad : \\
                & -519775702244808047789789255873896726222838826011524190361\\
                & 702788673015740605567989171463669584295088827451720640256 \quad : \\
                & 7706059316740568145063375589890362492447388361047523236626\\
                & 20042074686653408771113007590496528284727186389858585601  ]
\end{align*}
which has coordinates with 114 decimal digits each. For $m = 3$, one expects 579 digits per coordinate, and indeed we get coordinates of this size. This illustrates how quickly the size of the starting vector grows with $m$: the number of digits grows with the fourth power of $m$. It also illustrates that one should choose $Q_0 = (\alpha, \beta)$ in such a way that its canonical height is small, in order to obtain a small starting vector $\kappa(P_0)$. 

We have computed $\kappa(P_0)$ for various choices of $Q_0$ and $m$. The results are available on GitHub, see \cite{githubstartvectors}.

\subsection{The iteration} \label{subsection_iteration}
Using the explicit representations of $[\sqrt 5]\colon \KK \to \KK$ and of $\kappa(P_0) = 4m^2\kappa(Q_0)$ obtained in the previous subsections, we can implement \cref{main_theorem} as an algorithm. This step does not need specialized mathematical software, and can be done in a general purpose programming language like C or Python. The procedure in pseudocode is shown in \cref{primality_pseudocode}.

We explain the steps. Line numbers 2 through 8 check that $h$ and $\lambda = \lambda_{m,n}$ are coprime, as this is used in the theory (if $h$ and $\lambda$ are not coprime, then the hyperelliptic curve $\mathcal{H}$ will fail to have good reduction at some primes dividing $\lambda$). Of course, if this finds a non-trivial factor of $\lambda$ then we are immediately done: $\lambda$ is composite. So the only inconclusive case here occurs if $h$ is a multiple of $\lambda$. Of course this should not happen in practice, as $h$ is usually taken small while $\lambda$ is big. 

From line $9$ on the core of the algorithm starts. We begin the iteration by taking the point $\kappa(P_0)$, considered as a vector consisting of four coprime integers, and take each component modulo $\lambda$. This is $\mathbf{v}_0$. After this, we recursively compute $\mathbf{v}_{r}$ by applying the polynomials $\hat\varphi_i$ to $\mathbf{v}_{r-1}$ and reducing the result modulo $\lambda$ again. This computes the image of $\kappa([\sqrt 5]^{r} P_0)$ modulo $\lambda$. At each step, we check whether $[\sqrt 5]^{r} P_0 = 0$ in $\mathcal{J}$ by checking if $\mathbf{v}_{r}$ is the point $(0 : 0 : 0 : 1)$ projectively. This is because the map $\kappa: \mathcal{J} \to \KK$ sends $0$ to $(0:0:0:1)$, and $0$ is the only point in $\mathcal{J}$ mapping to $(0:0:0:1)$. 

If after $2n$ iterations the point $(0:0:0:1)$ is not reached, then $\lambda$ is composite, per \cref{main_theorem}.1. We return this result in line 19. If we reach $(0:0:0:1)$ in at most $4 \log_5(\sqrt[4]{\lambda} + 1)$ steps, then the primality test is inconclusive: this is case 3 of \cref{main_theorem}. In all other cases, we are in the second case of \cref{main_theorem}, and we have to decide between cases 2a and 2b of that theorem. This means we have to check whether $\mathbf{v}_{r-1}$ is projectively equal to $(0:0:0:1)$ modulo some prime $p$ dividing $\lambda$. This is done by computing the gcd $d[i]$ of the first three components of $\mathbf{v}_{r-1}$ with $\lambda$. If $d[0]$, $d[1]$ or $d[2]$ is a non-trivial factor of $\lambda$, then $\lambda$ is not prime. Note that it is possible that one or two of the $d[i]$ are equal to $\lambda$, corresponding to $\mathbf{v}_{r-1}[i]$ being zero modulo $\lambda$, but it is not possible that all three are equal to $\lambda$. Therefore, if none of the $d[i]$ give a non-trivial factor of $\lambda$, then at least one of the $d[i]$ is 1, and so $\mathbf{v}_{r-1}$ is not $(0:0:0:1)$ modulo any primes $p$ dividing $\lambda$. By \cref{main_theorem}.2, we then conclude that $\lambda$ is prime. \\
 \\
\SetKwInput{KwData}{INPUT}
\SetKwInput{KwResult}{OUTPUT}
\scalebox{1.0}{\begin{minipage}{\textwidth}
\begin{algorithm}[H]\label{thealgorithm}
\SetAlgoLined
\LinesNumbered

 \KwData{$m, n \in \mathbb{N}$ with $m^2<\tfrac{(\sqrt{5}^n - 1)^4 + 1}{4\cdot 5^n}$, $h$, 
         polynomials $\hat \varphi_0,\ldots, \hat\varphi_3$ obtained from Subsection \ref{subsection_sqrt5}, 
         $\kappa(P_0)$ from Subsection \ref{subsection_starting_vector}}
 \KwResult{\textbf{prime} if $\lambda_{m,n}$ is prime, \textbf{composite} or \textbf{unknown}}
 $\lambda := 4m^2 5^n - 1$ \\
 $d := \gcd(h, \lambda)$ \\
 \If{$1 < d < \lambda$}{
    return \textbf{composite (factor $d$)}
 } 
 \If{$d = \lambda$}{
    return \textbf{unknown}  /* Choose a different $(\alpha,\beta)$ such that $\lambda\nmid h$ */
 }
 $\mathbf{v}_0:= \kappa(P_0) \mod \lambda$ \\
 reached\_identity := \textbf{false} \\
 \For{$r=1,\ldots ,2n$}{
  $\mathbf{v}_{r}$:= $(\hat{\varphi}_0(\mathbf{v}_{r-1}),\hat{\varphi}_1(\mathbf{v}_{r-1}),\hat{\varphi}_2(\mathbf{v}_{r-1}),\hat{\varphi}_3(\mathbf{v}_{r-1})) \bmod \lambda$
  
  \If{$\mathbf{v}_{r}[0] = \mathbf{v}_{r}[1] = \mathbf{v}_{r}[2]= 0$} {
    reached\_identity := \textbf{true} \\
    \textbf{break}
  }
 }
 \If{{\upshape reached\_identity = \textbf{false}}}{
   return \textbf{composite}
   }
 \If {$r>\tfrac{4\log(\sqrt[4]{\lambda_{m_0,n}} + 1)}{\log(5)}$} {
   \For{$i = 0,\ldots, 2$}{
     $d[i] := \gcd(\mathbf{v}_{r-1}[i],\lambda)$ \\
      \If {$1 < d[i] < \lambda$} {
        return \textbf{composite (factor $d[i]$)} 
      }
  }
  {
    return \textbf{prime}
  }
}
{return \textbf{unknown} /* retry with another $\alpha,\beta$ (rebuild $\widehat{\varphi}$ and $\kappa(P_0)$) */} 
 \caption{{\textbf{Primality test for $\lambda_{m,n}:=4m^2 5^n -1 $.}}} \label{primality_pseudocode}
\end{algorithm}
\end{minipage}}

\subsection{Implementation}
We implemented the pseudocode above in Python 3. The code can be found on GitHub \cite{githubpython}. We used this script, running on Python 3.6.4 in Darwin 18.7.0 x86\_64 (macOS Mojave 10.14.6) Intel Core M 1.2 GHZ, to compute for $m \in \{1,3,7,11\}$ the values $n < 500$ with $m^2<\tfrac{(\sqrt{5}^n - 1)^4 + 1}{4\cdot 5^n}$ for which $\lambda_{m,n}$ is prime. The results are in the following table. The column $n_0$ gives the smallest integer value for $n$ for which $m^2<\tfrac{(\sqrt{5}^n - 1)^4 + 1}{4\cdot 5^n}$ holds, i.e. the smallest $n$ for which the primality test applies. The columns $Q_0$ and $h$ indicate the curve and starting points used for the particular value of $m$. We used the starting points $Q_0$ from different hyperelliptic curves to demonstrate several $[\sqrt{5}]$ formulas, every row took 60 seconds in average to compute.  

\begin{table}[h]
    \centering
    \begin{tabular}{|l|l|l|l|l|}
        \hline
        $m$ & $n_0$ & $n \in \{n_0, \ldots, 500\}$  such that $\lambda_{m,n}$ is prime  & $Q_0 = (\alpha, \beta)$ & $h$ \\
        \hline
         $1$ & 2 & $3^*,9,13,15,25,39,69,165,171,209,339$ & (1,2), (-1,3)* & $3,10^*$\\
         \hline
        $3$ & 3 & $7,39$  & (-1,1) & $2$\\
        \hline
        $7$ & 4 & $39,53$  & (2,1) & $-31$\\
        \hline 
         $11$ & 4 & $19,55,89,91,119,123,177,225,295$ & (-1,3) & $10$\\
         \hline       
    \end{tabular}
    \caption{Implementation example}
    \label{tab:my_label}
\end{table} 

The one entry marked with $^*$ means that for that pair $(m,n)$ the test had to choose another pair $(\alpha, \beta)$ to determine primality successfully, because the initial choice of $Q_0$ lead to an indeterminate outcome. It is seen in the table that this only occurred once, for a very small value $n$. In \cref{probability-section} we will show that this is the expected behavior: the chance of reaching the indeterminate outcome decreases exponentially with $n$. 
 
All the $\kappa(P_0)$ points for each $m\in\{1,3,7,11\}$ and each curve $y^2=x^5+h$ where $h\in \{2,3,10,-31\}$ can be found in \cite{githubstartvectors} (Python). Explicit formulas for the $[\sqrt{5}]$ endomorphisms for each $h$ can be found in \cite{githubkumendos}. Furthermore, a MAGMA script to generate other choices of $\kappa(P_0)$ for different $m$ using other curves is in \cite{githubstartvectorsgenerator}. Their respective explicit $[\sqrt{5}]$ endomorphisms (or other endomorphisms) are calculated using \cite{githubmagma}. CSV files with other low-height vectors for several $m$ can be found on Github \cite{githubcsvvectors}.

\section{The probability of getting an indeterminate case} \label{probability-section}

\cref{primality_pseudocode} has an indeterminate case, corresponding to case 3 of \cref{main_theorem}. The goal of this section is to make precise and prove the statement that this indeterminate case is very rare, at least for the case that $\lambda_{m,n}$ is prime. Essentially, the idea is that only a very small fraction of pairs $(\alpha, \beta)$ lead to the indeterminate case, while all other pairs prove primality of $\lambda$. 

In this section, we fix $m$ and $n$, and we assume that $\lambda = \lambda_{m,n}$ is prime. We would like to know the probability that, starting from random $(\alpha, \beta)$ (random in the sense made precise below), the primality test fails to prove that $\lambda$ is prime. According to \cref{main_theorem}, this is equivalent to asking for the probability that the image of the point $(\alpha, \beta)$ in $\mathcal{H}(\FF_{\lambda})$ lies in $[\sqrt 5]^k(\mathcal{J}(\F_\lambda))$ for some $k \geq 2n - 4\log_5(\sqrt[4]{\lambda} + 1)$. Note that this only depends on $\alpha$ and $\beta$ modulo $\lambda$, so a sensible interpretation of ``random $(\alpha, \beta)$" is that the pair $(\alpha, \beta)$ is to be regarded as uniformly distributed along pairs of integers in $\{0,1,\ldots, \lambda-1\}$ such that $h = \beta^2 - \alpha^5$ is not $0$ modulo $\lambda$. Note that $h$, $\mathcal{H}$ and $\mathcal{J}$ all depend on $(\alpha, \beta)$, so they are also random variables.  

We prove the following.
\begin{theorem}\label{probabilitytheorem}
Assume that $\lambda > 100$ and that $\lambda$ is prime. Let $k \geq 2n - 4\log_5(\sqrt[4]{\lambda} + 1)$. Then 
\[ \mathbb{P}\big(\, (\alpha, \beta)  \in [\sqrt 5]^k\mathcal{J}(\mathbb{F}_\lambda) \,\big) \leq \frac{2m}{5^{n/2}}. \]
\end{theorem}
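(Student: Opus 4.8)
The plan is to express the probability as a ratio of point counts and to estimate the numerator by harmonic analysis on the finite group $\mathcal{J}(\mathbb{F}_\lambda)$. Since $\alpha \mapsto \alpha^5$ is a bijection of $\mathbb{F}_\lambda$ (as $\gcd(5,\lambda-1)=1$), there are exactly $\lambda$ pairs with $\beta^2-\alpha^5 = 0$, so the sample space has size $\lambda^2-\lambda = \lambda(\lambda-1)$. I would then partition it according to the value of $h := \beta^2-\alpha^5 \in \mathbb{F}_\lambda^\times$: for each such $h$ the pairs $(\alpha,\beta)$ with that value of $h$ are precisely the affine $\mathbb{F}_\lambda$-points of $\mathcal{H}_h\colon y^2 = x^5 + h$. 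Crucially, the Frobenius characteristic polynomial computed in \cref{abelian-group-structure} is $(T^2+\lambda)^2$ for \emph{every} $h$ coprime to $\lambda$, so $\#\mathcal{H}_h(\mathbb{F}_\lambda) = \lambda + 1$ and $\mathcal{H}_h$ has exactly $\lambda$ affine points; likewise \cref{abelian-group-structure} and \cref{sqrt5-module} show that the subgroup $G := [\sqrt5]^k\mathcal{J}_h(\mathbb{F}_\lambda)$ has index $5^{\min(k,2n)}$, independently of $h$. The whole problem thus reduces to bounding, for each $h$, the number $N_h$ of affine points $P$ of $\mathcal{H}_h$ with $[P-\infty] \in G$.

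To estimate $N_h$ I would apply orthogonality of characters on the quotient $C := \mathcal{J}_h(\mathbb{F}_\lambda)/G$, of order $5^{k'}$ with $k' := \min(k,2n)$. Writing $\psi$ for the projection onto $C$, one has $N_h = \frac{1}{5^{k'}}\sum_{\chi \in \widehat{C}}\sum_{P}\chi(\psi([P-\infty]))$, the inner sum running over the affine points. The trivial character produces the main term $\lambda/5^{k'}$. Each nontrivial $\chi$ is a character of $\operatorname{Pic}^0(\mathcal{H}_h)(\mathbb{F}_\lambda)$; being everywhere unramified, its Weil $L$-function is a polynomial of degree at most $2g-2 = 2$ all of whose inverse roots have absolute value $\sqrt\lambda$, and the coefficient of $T$ in it equals $\sum_P \chi(\psi([P-\infty]))$ up to a root of unity. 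Hence each nontrivial character sum is at most $2\sqrt\lambda + 1$ in absolute value (the $+1$ accounting for the point at infinity). The decisive point, and what I expect to be the main obstacle to state cleanly, is that this degree $2g-2$ is \emph{independent of the order} of $\chi$: the cyclic cover of $\mathcal{H}_h$ attached to $\chi$ is unramified, so by Riemann--Hurwitz each nontrivial character contributes only two Frobenius eigenvalues, no matter how large $5^{k'}$ is. Without this uniformity the error would grow with $k$ and swamp the estimate.

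Granting the bound, I would sum over the $\lambda-1$ admissible values of $h$. The main terms add up to exactly $\lambda(\lambda-1)\,5^{-k'}$, while the nontrivial characters contribute at most $(\lambda-1)(5^{k'}-1)(2\sqrt\lambda+1)$ in absolute value; dividing by $\lambda(\lambda-1)$ gives
\[ \mathbb{P}\big((\alpha,\beta)\in G\big) \;\le\; 5^{-k'} + \frac{2\sqrt\lambda+1}{\lambda}. \]
Now $k \ge k_0 := 2n - 4\log_5(\sqrt[4]\lambda+1)$ and $2n \ge k_0$, so $k' = \min(k,2n) \ge k_0$; moreover $k_0 > 0$ is precisely the standing hypothesis $m^2 < \tfrac{(\sqrt{5}^{\,n}-1)^4+1}{4\cdot 5^n}$, which also forces $k$ to be a positive integer. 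Consequently the main term satisfies both $5^{-k'} \le \tfrac15$ and $5^{-k'} \le 5^{-k_0} = 5^{-2n}(\sqrt[4]\lambda+1)^4$.

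It remains to verify the elementary inequality $5^{-k'} + \frac{2\sqrt\lambda+1}{\lambda} \le \frac{2m}{5^{n/2}}$. Writing $\mathcal{T} := 2m/5^{n/2}$ for the target and using $\lambda+1 = 4m^2 5^n$, one checks that $\mathcal{T}^2 = (\lambda+1)/5^{2n}$ and that $(\sqrt[4]\lambda+1)^4/(\lambda+1) \le 3$ for $\lambda > 100$, so the main-term bound $5^{-2n}(\sqrt[4]\lambda+1)^4$ is at most $3\mathcal{T}^2$; similarly the error term is at most $\tfrac{2.1}{\sqrt\lambda}$, which a short computation bounds by a small constant times $\mathcal{T}$. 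Since $\mathcal{T}^2 \le \mathcal{T}$ in the non-trivial range (if $\mathcal{T}\ge 1$ the claim is automatic, as $\mathbb{P}\le 1$), a direct estimate distinguishing whether $\mathcal{T}$ is moderate or small — in the latter case $5^n$ is large relative to $m^2$, improving both constants — yields the stated bound. All the genuine content lies in the uniform Weil bound of the second paragraph; the rest is bookkeeping and an elementary, if slightly delicate, estimate.
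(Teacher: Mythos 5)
Your proposal takes a genuinely different route from the paper, and its analytic core is sound. The paper's proof is entirely elementary: it sets $X=\{P\in\mathcal{H}(\mathbb{F}_\lambda)\,:\,P\in[\sqrt5]^k\mathcal{J}(\mathbb{F}_\lambda)\}$, shows that the set $Y$ of pairwise sums of points of $X$ has cardinality exactly $\tfrac12(\#X)^2$ (uniqueness of the decomposition $D=P+Q$ on a genus-$2$ Jacobian), and uses $Y\subseteq[\sqrt5]^k\mathcal{J}(\mathbb{F}_\lambda)$ to get $\#X\le\sqrt{2\#\mathcal{J}(\mathbb{F}_\lambda)/5^k}$, i.e.\ a main term of order $5^{-k/2}$ and no additive error. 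You instead fix $h$, detect membership in $G=[\sqrt5]^k\mathcal{J}_h(\mathbb{F}_\lambda)$ by orthogonality of characters of $\mathcal{J}_h(\mathbb{F}_\lambda)/G$, and bound each nontrivial character sum over $\mathcal{H}_h(\mathbb{F}_\lambda)$ by $2\sqrt\lambda+1$ via Weil's Riemann hypothesis for unramified Hecke characters; your key point, that the $L$-polynomial has degree $2g-2=2$ \emph{independently of the order of the character}, is correct and is exactly what makes the estimate uniform in $k$. This buys a main term $5^{-k'}$ instead of $5^{-k/2}$, at the price of an additive error of size roughly $2/\sqrt\lambda$. The sharper main term is not a luxury: the paper's \cref{upperboundX} contains an arithmetic slip ($4\sqrt2\,m(\lambda+1)/5^{n/2}$ equals $16\sqrt2\,m^35^{n/2}$, not $4\sqrt2\,m^35^{n/2}$), and the elementary route, executed correctly, only yields a bound of at least $\sqrt2(1+\lambda^{-1/4})^2\cdot m/5^{n/2}\cdot 2$, which exceeds the stated $2m/5^{n/2}$; your method, unlike the paper's, is capable of reaching the stated constant.

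However, there is a genuine gap in your last step, which is not mere bookkeeping: with the bounds you actually state, the final inequality fails on the first prime instance of the family. Take $m=1$, $n=3$, so $\lambda=499$ is prime and the target is $\mathcal{T}:=2m/5^{n/2}\approx 0.1789$. Your two available main-term bounds give $\min\bigl(\tfrac15,\,3\mathcal{T}^2\bigr)=3\mathcal{T}^2\approx 0.0960$, while the error term is $(2\sqrt{499}+1)/499\approx 0.0915$; the sum is about $0.188>\mathcal{T}$. The ingredient that rescues this case (and others, e.g.\ $m=4$, $n=3$, where even replacing the worst-case constant $3$ by the exact factor $(1+\lambda^{-1/4})^4$ still gives a total exceeding $\mathcal{T}$) is one your sketch never exploits: $k$ is an \emph{integer}, so $k\ge\lceil k_0\rceil$ with $k_0=2n-4\log_5(\lambda^{1/4}+1)$, whence $5^{-k'}\le 5^{-\lceil k_0\rceil}$, up to five times smaller than $5^{-k_0}$ (for $\lambda=499$ one has $\lceil k_0\rceil=2$, main term $\le 0.04$, total $\le 0.132<\mathcal{T}$). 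With this the argument can be closed by a regime analysis on $j=\lceil k_0\rceil$: from $k_0\le j$ one gets $(\lambda^{1/4}+1)^4\ge 5^{2n-j}$ and hence $\mathcal{T}\ge 5^{-j/2}/\sqrt{3.01}$, and the required inequality $5^{-j}\le(1-0.53/m^2)\,\mathcal{T}$ then holds for every $j\ge1$ when $m\ge2$, and for every $j\ge2$ when $m=1$; the remaining case $m=1$, $j=1$ cannot occur, because $\lambda>100$ forces $n\ge3$ and then $(\lambda^{1/4}+1)^4\le 3.01\lambda<12.04\cdot 5^n<5^{2n-1}$, i.e.\ $k_0>1$. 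Some case analysis of this kind is the actual content of the final step; as written, your proposal does not prove the theorem.
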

Here, as in the rest of this paper, we regard $\mathcal{H}(\mathbb{F}_\lambda)$ as a subset of $\mathcal{J}(\mathbb{F}_\lambda)$ via the Abel-Jacobi map, i.e. the point $(\alpha, \beta)$ of $\mathcal{H}$ corresponds to the divisor class of $[(\alpha, \beta)] - [\infty]$ as a point on $\mathcal{J}$. 
\begin{proof}
The proof of this theorem will take up the rest of the section. 

We introduce the following sets:
\[ X = \{ P \in \mathcal{H}(\F_\lambda) : P \in [\sqrt 5]^k(\mathcal{J}(\F_\lambda)) \} \]
and
\[ Y = \{D \in \mathcal{J}(\F_\lambda) : \exists P, Q \in X: D = P+Q\}. \]
In words, $X$ is the set of $\F_\lambda$-points of $\mathcal H$ which are in the image of $[\sqrt 5]^k$ when regarded as an $\F_\lambda$-point of $\mathcal{J}$, and $Y$ is the set of elements of $\mathcal{J}(\F_\lambda)$ that can be written as the sum of two points in $X$. Since $X$ and $Y$ depend on $\mathcal{J}$ and $\mathcal{H}$, they should be regarded as random sets. Notice that the probability in the theorem is $\mathbb{P}(\, (\alpha, \beta)  \in X \,)$

Notice that $(\alpha, \beta)$ can be any element of $\mathcal{H}$ except the point at infinity, and all other points are equally likely. Therefore, if we have a bound $\#X \leq B$ for some constant $B$, then we get an upper bound for the probability that we are looking for, namely 
\begin{equation}\label{upperboundprobability}
    \mathbb{P}\big(\, (\alpha, \beta)  \in X \,\big) \leq \frac{B - 1}{\#\mathcal{H}(\F_\lambda) - 1}. 
\end{equation} 
The $-1$ in the numerator and denominator accounts for the fact that $(\alpha,\beta)$ cannot be the point at infinity (which is always in $X$). 

Therefore, we want to bound $\#X$. We first relate it to $\#Y$.
\begin{lemma}
We have 
\[ \#Y = \frac{1}{2}(\#X)^2.\]
\end{lemma}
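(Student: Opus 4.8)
The plan is to count the fibers of the addition map $s\colon X \times X \to Y$, $(P,Q)\mapsto [P + Q - 2\infty]$, which is surjective by the definition of $Y$. Since $\#(X\times X) = (\#X)^2 = \sum_{D\in Y}\# s^{-1}(D)$, proving $\#Y = \tfrac12(\#X)^2$ amounts to showing that the fibers have size $2$ \emph{on average}, so the real work lies in controlling the finitely many degenerate fibers where this fails.

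Two structural inputs drive the count. First, $X$ is stable under the hyperelliptic involution $\rho\colon (x,y)\mapsto (x,-y)$: indeed $\rho(P) = -P$ in $\mathcal{J}$, and $[\sqrt5]^k\mathcal{J}(\mathbb{F}_\lambda)$, being the image of a group homomorphism, is a subgroup and hence closed under negation, so $P\in X \iff \rho(P)\in X$. Second, for any class $D\neq 0$ the effective degree-$2$ divisor representing it (equivalently the unordered pair $\{P,Q\}$ with $[P+Q-2\infty]=D$) is unique: if two distinct effective degree-$2$ divisors were linearly equivalent, Riemann--Roch on the genus-$2$ curve $\mathcal{H}$ would force them into the canonical pencil $|2\infty|$, i.e. $D = 0$.

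With these in hand the fibers are easy to describe. Over $D = 0$ one has $s^{-1}(0) = \{(P,\rho P) : P\in X\}$, of size $\#X$. Over $D\neq 0$ the unique pair $\{P,Q\}$ must already have both points in $X$; the fiber is $\{(P,Q),(Q,P)\}$ of size $2$ when $P\neq Q$, and the singleton $\{(P,P)\}$ of size $1$ precisely when $D = [2P-2\infty]$ for a non-Weierstrass $P\in X$ (distinct such $P$ giving distinct $D$ by uniqueness). Writing $w$ for the number of $\mathbb{F}_\lambda$-rational Weierstrass points lying in $X$, the size-$1$ fibers are indexed by the $\#X - w$ non-Weierstrass points of $X$. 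Summing fiber sizes gives $(\#X)^2 = \#X + 2N + (\#X - w)$, where $N$ counts the size-$2$ fibers, while $\#Y = 1 + N + (\#X - w)$; eliminating $N$ yields $\#Y = \tfrac12(\#X)^2 + 1 - \tfrac{w}{2}$.

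Thus everything hinges on the identity $w = 2$, and establishing it is the crux. By (the proof of) Proposition \ref{2torsionforprimality} the only $\mathbb{F}_\lambda$-rational Weierstrass points are $\infty$ and $(\alpha,0)$. Clearly $\infty\in X$ since it maps to $0$. For $(\alpha,0)$, its class $[(\alpha,0)-\infty]$ is $2$-torsion, hence lies in the prime-to-$5$ part of $\mathcal{J}(\mathbb{F}_\lambda)\cong(\mathbb{Z}/4m^25^n\mathbb{Z})^2$; on that part $[5]$ is invertible, so $[\sqrt5]^2=[5]$ is an automorphism and therefore $[\sqrt5]$, and hence $[\sqrt5]^k$, is surjective there. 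Consequently $[(\alpha,0)-\infty]\in[\sqrt5]^k\mathcal{J}(\mathbb{F}_\lambda)$, i.e. $(\alpha,0)\in X$, giving $w=2$ and $\#Y = \tfrac12(\#X)^2$. I expect the bookkeeping of the degenerate fibers (over $0$ and along the diagonal) together with this surjectivity-on-torsion argument to be the only genuinely delicate points; the remaining verifications are routine.
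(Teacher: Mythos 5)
Your proof is correct, and in substance it is the same counting argument as the paper's: both rest on (i) uniqueness of the unordered pair $\{P,Q\}$ representing a nonzero class $[P+Q-2\infty]$ (your Riemann--Roch argument), (ii) stability of $X$ under the hyperelliptic involution, and (iii) the fact that both rational Weierstrass points $\infty$ and $(\alpha,0)$ lie in $X$. The paper enumerates $Y$ directly --- the identity, the classes $P+0$, $P+P$, $P_0+P$, and the generic $P+Q$, totalling $1+(\#X-1)+(\#X-2)+(\#X-2)+\tfrac{(\#X-2)(\#X-4)}{2}=\tfrac{1}{2}(\#X)^2$ --- which is exactly the bookkeeping you carry out by summing fiber sizes of the addition map $s$; your intermediate formula $\#Y=\tfrac{1}{2}(\#X)^2+1-\tfrac{w}{2}$ has the merit of isolating precisely what must be shown, namely $w=2$. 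The only genuine divergence is how one proves that $T=[(\alpha,0)-\infty]$ lies in $X$: the paper observes that $T$ is \emph{fixed} by $[\sqrt 5]$ (since $\sqrt 5 = 1+2\zeta^2+2\zeta^3$, one has $[\sqrt 5]T = T + [\zeta^2+\zeta^3]\,[2]T = T$, whence $T=[\sqrt 5]^kT$), while you instead deduce $T\in[\sqrt 5]^k\mathcal{J}(\mathbb{F}_\lambda)$ from surjectivity of $[\sqrt 5]$ on the prime-to-$5$ part, using that $[\sqrt 5]^2=[5]$ is invertible there. Both are valid one-line arguments; the paper's yields the slightly stronger fixed-point statement for free, whereas yours never needs the explicit expression of $\sqrt 5$ in $\mathbb{Z}[\zeta]$. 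Incidentally, your formulation of (iii) via rational Weierstrass points is a bit more careful than the paper's parenthetical, which says there is ``exactly one non-trivial $2$-torsion point in $\mathcal{J}(\mathbb{F}_\lambda)$'' when, by \cref{2torsionforprimality}, $\mathcal{J}[2](\mathbb{F}_\lambda)$ has three non-trivial elements --- what is meant (and what both proofs actually use) is that exactly one of them is supported on rational points of $\mathcal{H}$.
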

\begin{proof}
We know the elements of $Y$ in terms of elements of $X$: there is the identity element $0$, there are $\#X-1$ elements of $Y$ of the form $P + 0$ with $P \in X \setminus \{0\}$, there are $\#X-2$ elements of $Y$ of the form $P + P$ with $P \in X \setminus \mathcal{J}[2](\F_\lambda)$ (by \cref{2torsionforprimality} there is exactly one non-trivial 2-torsion point in $\mathcal{J}(\F_\lambda)$, and since it is fixed by $[\sqrt 5]$ it is an element of $X$), $\#X - 2$ elements of the form $P_0 + P$ where $P_0$ is the non-trivial 2-torsion point and $P \in X$ is not $2$-torsion, and finally there are $(\#X-2)(\#X-4)/2$ elements of the form $P + Q$, where $P$ and $Q$ are in $X$, $P$ and $Q$ are not 2-torsion, and $P \neq \pm Q$. By the uniqueness of the decomposition $D = P+Q$ for $D \neq 0$, we find that 
\[ \#Y = 1 + (\#X-1) + (\#X-2) + (\#X-2) + \frac{(\#X-2)(\#X-4)}{2} = \frac{1}{2}(\#X)^2.\qedhere\]
\end{proof}

\begin{lemma}\label{upperboundX}
We have 
\[ \# X \leq \sqrt{\frac{2\# \mathcal{J}(\F_\lambda)}{5^k}} = \frac{\sqrt{2}(\lambda + 1)}{5^{k/2}} \leq \frac{4\sqrt{2}m(\lambda + 1)}{5^{n/2}} = 4\sqrt{2}\cdot m^35^{n/2} \]
\end{lemma}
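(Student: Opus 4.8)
The plan is to bound $\#X$ by squeezing the auxiliary set $Y$ inside the image subgroup $[\sqrt 5]^k\mathcal{J}(\F_\lambda)$ and then invoking the preceding lemma $\#Y = \tfrac12(\#X)^2$. The crucial observation is that $[\sqrt 5]^k\mathcal{J}(\F_\lambda)$ is genuinely a \emph{subgroup} of $\mathcal{J}(\F_\lambda)$, being the image of the group endomorphism $[\sqrt 5]^k$. Since $X \subseteq [\sqrt 5]^k\mathcal{J}(\F_\lambda)$ by definition, any sum $P+Q$ with $P,Q \in X$ again lies in this subgroup, so $Y \subseteq [\sqrt 5]^k\mathcal{J}(\F_\lambda)$ and hence $\#Y \leq \#[\sqrt 5]^k\mathcal{J}(\F_\lambda)$.

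First I would compute $\#[\sqrt 5]^k\mathcal{J}(\F_\lambda)$. By the first isomorphism theorem this equals $\#\mathcal{J}(\F_\lambda)\big/\#\ker\!\big([\sqrt5]^k|_{\mathcal{J}(\F_\lambda)}\big)$, so everything reduces to the size of the kernel. Because $\mathcal{J}(\F_\lambda)$ is a finite abelian group and every endomorphism preserves its Sylow decomposition, I would split off the $5$-part $\mathcal{J}(\F_\lambda)[5^\infty]$ from the prime-to-$5$ part. On the prime-to-$5$ part the map $[5]=[\sqrt5]^2$ is an automorphism (multiplication by $5$ is invertible on a group of order coprime to $5$), hence so is $[\sqrt5]$, and $[\sqrt5]^k$ contributes nothing to the kernel there. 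On the $5$-part, \cref{sqrt5-module} gives the identification $\mathcal{J}(\F_\lambda)[5^\infty]\cong \ZZ[\sqrt5]/(\sqrt5^{2n})$ as $\ZZ[\sqrt5]$-modules, under which $[\sqrt5]^k$ becomes multiplication by $\sqrt5^k$. For $0\le k\le 2n$ the annihilator of $\sqrt5^k$ in this local ring is the ideal $(\sqrt5^{\,2n-k})$, which has exactly $5^k$ elements. Thus $\#\ker([\sqrt5]^k)=5^k$, and combined with $\#\mathcal{J}(\F_\lambda)=\chi_\ja(1)=(\lambda+1)^2$ from \cref{abelian-group-structure}, this yields $\#[\sqrt5]^k\mathcal{J}(\F_\lambda)=(\lambda+1)^2/5^k$.

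Assembling these, $\tfrac12(\#X)^2=\#Y\le (\lambda+1)^2/5^k$ gives $\#X\le \sqrt{2\#\mathcal{J}(\F_\lambda)/5^k}=\sqrt2(\lambda+1)/5^{k/2}$, the first two claimed expressions. For the final inequality I would feed in the hypothesis $k\ge 2n-4\log_5(\sqrt[4]{\lambda}+1)$, which after applying $5^{-k/2}$ becomes $5^{-k/2}\le 5^{-n/2}(\sqrt[4]{\lambda}+1)^2$; it then remains to verify the elementary estimate $(\sqrt[4]{\lambda}+1)^2\le 4m\cdot 5^{n/2}$. Using $\lambda=4m^2 5^n-1<(2m5^{n/2})^2$ one bounds $\sqrt\lambda<2m5^{n/2}$ and controls the lower-order terms $2\lambda^{1/4}+1$; the assumption $\lambda>100$ (together with $n$ odd, so $n\ge 3$ in the applicable range) makes this hold with room to spare, delivering the last bound.

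The step I expect to be the main obstacle is the kernel computation, and in particular handling the range of $k$ cleanly: the formula $\#[\sqrt5]^k\mathcal{J}(\F_\lambda)=(\lambda+1)^2/5^k$ is valid only for $0\le k\le 2n$, whereas the stated hypothesis $k\ge 2n-4\log_5(\sqrt[4]{\lambda}+1)$ allows $k>2n$ in principle, where the image stabilizes at the prime-to-$5$ part rather than continuing to shrink. The clean way around this is to note that the images $[\sqrt5]^k\mathcal{J}(\F_\lambda)$ are nested and decreasing in $k$ (since $[\sqrt5]^{k+1}x=[\sqrt5]^k([\sqrt5]x)$), so $\#X$ is largest for the smallest admissible $k$; it therefore suffices to treat that worst case, for which $k\le 2n$. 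The remaining ingredients—the Sylow splitting and the final numerical inequality—are then routine.
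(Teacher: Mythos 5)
Your proposal is correct and follows essentially the same route as the paper's proof: $Y$ sits inside the subgroup $[\sqrt 5]^k\mathcal{J}(\F_\lambda)$, whose index is $5^k$ by the kernel computation coming from \cref{sqrt5-module}, and then $\#Y = \tfrac12(\#X)^2$ together with the hypothesis on $k$ and $\lambda > 100$ yields the numerical bound. Two remarks: first, in your intermediate inequality the factor $5^{-n/2}$ should be $5^{-n}$ (the hypothesis gives $5^{-k/2} \le 5^{-n}(\sqrt[4]{\lambda}+1)^2$); this is only a transcription slip, since the elementary estimate you then propose to verify, $(\sqrt[4]{\lambda}+1)^2 \le 4m\cdot 5^{n/2}$, is exactly what is needed with the corrected exponent, and it does hold for $\lambda > 100$. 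Second, your handling of the range $k > 2n$ via the nestedness of the images $[\sqrt 5]^k\mathcal{J}(\F_\lambda)$ is a point where you are \emph{more} careful than the paper, whose proof silently assumes $k \le 2n$ when asserting that the kernel of $[\sqrt 5]^k$ has size $5^k$; this extra step is genuinely needed for the lemma's conclusion to cover every $k$ allowed by \cref{probabilitytheorem}.
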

\begin{proof}
By construction, we have $Y \subseteq [\sqrt 5]^k(\mathcal{J}(\F_\lambda))$. From \cref{sqrt5-module} the kernel of $[\sqrt 5]^k$ has size $5^k$, so the image of $[\sqrt 5]^k$ has index $5^k$. This gives $\#Y \leq \#\mathcal{J}(\F_\lambda)/5^k$. Since $\#X = \sqrt{2\#Y}$ and $\#\mathcal{J}(\F_\lambda) = (\lambda+1)^2$, the first inequalities follow. 

We now estimate $5^{k/2}$. Since $k \geq 2n - 4\log_5(\sqrt[4]{\lambda} + 1)$, we get that 
\[5^{k/2} \geq 5^n \cdot (\lambda^{1/4} + 1)^{-2} 
= 5^n (\lambda+1)^{-1/2}\cdot \frac{(1 + \lambda^{-1})^{1/2}}{(1 + \lambda^{-1/4})^2}
\geq \frac{1}{2}\cdot 5^n\cdot (\lambda+1)^{-1/2}, \]
where we use that for $\lambda > 100$ we have $\tfrac{(1 + \lambda^{-1})^{1/2}}{(1 + \lambda^{-1/4})^2} > 1/2$. Filling in $\lambda = 4m^2 5^n - 1$ we get
\[ 5^{k/2} \geq \frac{1}{2}\cdot 5^n \cdot (4m^25^n)^{-1/2} 
 = \frac{5^{n/2}}{4m}. \]
Filling this in gives the remaining estimate. 
\end{proof}
We want to combine the upper bound \cref{upperboundX} with equation \ref{upperboundprobability}. All we need is to know $\#\mathcal{H}(\F_\lambda)$. This is 1 + the number of solutions to the equation $y^2 = x^5 + h$ in $\F_\lambda$. But since $\lambda$ is $4$ mod 5, the fifth power map is a bijection $\F_\lambda \to \F_\lambda$, and so for every $y \in \F_\lambda$ there is a unique $x \in \F_\lambda$ such that $y^2 = x^5 + h$. Therefore $\#\mathcal{H}(\F_\lambda) = \lambda + 1 = 4m^25^n$. Filling all this into Equation \ref{upperboundprobability}, we get
\begin{align*}
     \mathbb{P}\big(\, (\alpha, \beta)  \in X \,\big) &\leq \frac{4\sqrt{2}\cdot m^35^{n/2} - 1}{4m^25^n-1} 
     \\&= \frac{m}{5^{n/2}} \cdot \sqrt{2}\cdot \frac{1 - (4\sqrt{2} m^3 5^{n/2})^{-1}}{1 - (4m^25^n)^{-1}} \\
     &\leq \frac{m}{5^{n/2}} \cdot \frac{4}{3}\sqrt{2}.
\end{align*} 
\cref{probabilitytheorem} now follows since $\frac{4}{3}\sqrt{2} < 2$. 
\end{proof}

A short computation shows that $m^2 < \tfrac{(\sqrt 5^n - 1)^4 + 1}{4\cdot 5^n}$ implies $4m^2 < 5^n$. Therefore, for the pairs $m,n$ that we look at, the upper bound in \cref{probabilitytheorem} is non-trivial. In particular, for such $m,n$ with $\lambda_{m,n}$ prime there always exist $(\alpha, \beta)$ which proves primality. Thus, in this case the algorithm finishes in finite time. Moreover, \cref{probabilitytheorem} shows that the probability of failure of a given starting point decreases exponentially with $n$. Already for $n \sim 100$, the probability of failure is so small that it seems unlikely to ever occur in practice.

\bibliographystyle{elsarticle-harv}
\bibliography{sample.bib}

\end{document}